\numberwithin{equation}{section}
\newcommand{\ZZ}{\mathbf{Z}}
\newcommand{\CC}{\mathbf{C}}
\newcommand{\QQ}{\mathbf{Q}}
\newcommand{\OO}{\mathcal{O}}
\newcommand{\iddb}{i \partial \overline{\partial}}
\newcommand{\db}{\overline{\partial}}
\newcommand{\al}{\alpha}
\newcommand{\ve}{\vert  }
\newcommand{\ep}{\epsilon}
\newcommand{\ld}{\lambda}
\newcommand{\qa}{\quad}
\newcommand{\vp}{\varphi}
\DeclareMathOperator{\dv}{div}
\newcommand{\ut}{\tilde{u}}
\newcommand{\yr}{ Y_{\reg}}
\newcommand{\noi}{\noindent}
\newcommand{\dt}{\Delta}
\providecommand{\abs}[1]{\left|#1\right|}
\providecommand{\wt}[1]{\widetilde{#1}}
\theoremstyle{plain}
\newtheorem{theorem}{Theorem}[section]
\newtheorem{cor}[theorem]{Corollary}
\newtheorem{prop}[theorem]{Proposition}
\newtheorem{thm}[theorem]{Theorem}
\newtheorem{lemma}[theorem]{Lemma}
\newtheorem{proposition}[theorem]{Proposition}
\newtheorem{definition}[theorem]{Definition}
 \newtheorem{example}[theorem]{\textnormal{\textbf{Example}}}
\theoremstyle{remark}
\newtheorem{remark1}[theorem]{Remark}
\DeclareMathOperator{\red}{red}
\DeclareMathOperator{\Supp}{Supp}
\DeclareMathOperator{\reg}{reg}
\DeclareMathOperator{\sing}{sing}
\DeclareMathOperator{\mult}{mult}
\DeclareMathOperator{\ord}{ord}
\DeclareMathOperator{\Diff}{Diff}
\DeclareMathOperator{\Adj}{Adj}
\DeclareMathOperator{\adj}{Adj}
\DeclareMathOperator{\AAdj}{AAdj}
\DeclareMathOperator{\aadj}{AAdj}
\DeclareMathOperator{\Ex}{Ex}
\DeclareMathOperator{\exc}{Exc}
\DeclareMathOperator{\bigO}{O}
\begin{document}

\title{On $L^2$ extension from singular hypersurfaces}

\keywords{$L^2$ extension theorems; Singularities of pairs; Inversion of adjunction; Ohsawa measure; Kawamata log terminal; purely log terminal; plurisubharmonic}

\subjclass[2010]{}

\author{Dano Kim and Hoseob Seo }

\date{}

\maketitle

\begin{abstract}

\noi  In $L^2$ extension theorems from a singular hypersurface in a complex manifold, important roles are played by  certain measures  such as the Ohsawa measure which determine when a given function can be extended.  We show that the singularity of the Ohsawa measure can be identified in terms of singularity of pairs from algebraic geometry. Using this, we give an analytic proof of the inversion of adjunction  in this setting. Then these considerations enable us to compare various positive and negative results on $L^2$ extension from singular hypersurfaces. In particular, we generalize a recent negative result of Guan and Li which places limitations on strengthening such $L^2$ extension by employing a less singular measure in the place of the Ohsawa measure.

\end{abstract}

\section{Introduction}

 Let $X$ be a complex manifold and $Y \subset X$ a smooth hypersurface.  In their seminal paper~\cite{OT}, Ohsawa and Takegoshi showed the following $L^2$ extension from $Y$ to $X$ when $X \subset \CC^n$ is a bounded pseudoconvex domain and $Y$ is a hyperplane.  Let $\vp$ be a psh (i.e. plurisubharmonic) function on $X$. If a holomorphic function $u$ on $Y$ satisfies the finiteness of the RHS of the following \eqref{est0}, then there exists a holomorphic function $\ut$ on $X$ such that $\ut|_Y = u$ and satisfying \eqref{est0}: 
 
 \begin{equation}\label{est0}
  \int_X \abs{\ut}^2  e^{-\vp} dV_X   \le C  \int_Y \abs{u}^2 e^{-\vp}|_Y dV_Y 
\end{equation}

\noi where $dV_X$ and $dV_Y$ are smooth volume forms given by Lebesgue measures and  $C$ is a constant depending only on the diameter of $X$. Since \cite{OT}, there have been extensive further developments and important applications of $L^2$ extension theorems in both analysis and geometry (among so many others, cf. \cite{M93}, \cite{O4}, \cite{B96},  \cite{S98},  \cite{D00},  \cite{O5},  \cite{MV},  \cite{K10}, \cite{C11}, \cite{GZ12}, \cite{B13},  \cite{GZ15}, \cite{D15}, \cite{O7}, \cite{AM15}, \cite{CP20} for relevance to the subject of this article).  From the viewpoint of algebraic geometry, $L^2$ extension can be regarded as transcendental strengthening of cohomology vanishing theorems with far-reaching applications up to where vanishing theorems are not known to be applicable,  as in Siu's invariance of plurigenera \cite{S02}.

 It is then a fundamental question to ask whether such $L^2$ extension continues to hold when $Y$ is a  singular hypersurface. (By hypersurface, we mean a reduced subvariety of codimension $1$.)   Indeed, there have been positive results in this direction due to \cite{B96}, \cite{D00}, \cite{K10}, \cite{GZ15}, \cite{D15} and others.

 However, there also emerged recently a negative result  due to \cite{GL18} which states that $L^2$ extension `does not hold when $Y$ is singular' (cf. \cite[Thm. 3.31]{O18} for a related survey). This apparent dilemma raises the following natural question. 

\;

\noi \textbf{Question A.} 
 \emph{ How can we compare these positive and negative results on $L^2$ extension from singular hypersurfaces ?  }

\;

 We will answer this question in this paper by providing a uniform viewpoint in comparing these results.    The crux of the comparison lies in the volume form $dV_Y$ used in the $L^2$ norm on $Y$ (the RHS  of \eqref{est0}). \footnote{Although not necessary for this paper, we remark that another aspect of comparison could be on the curvature conditions in $L^2$ extension theorems, especially for compact $X$.}  When $Y$ is singular,  $dV_Y$ needs to become a \emph{singular} volume form, i.e. a \emph{measure} due to  technical but natural reasons arising from the used methods of $L^2$ estimates for $\db$.    On the other hand, the negative result of \cite{GL18} can be understood as saying that one cannot keep  using a smooth volume form (i.e. restriction to $Y$ of a smooth form on $X$) $dV_Y$ when $Y$ is singular : see Theorem~\ref{glgl}.  
 
 As the measure $dV_Y$ acquires singularities, it becomes significantly more nontrivial to satisfy the finiteness of the RHS of \eqref{est0}.  A priori, less singular $dV_Y$ will give a stronger $L^2$ extension theorem.  We remark here that the technical implementations of the {$L^2$ methods} used in the above positive results have quite some variations among them, hence their individual  $dV_Y$'s emerged in different and independent shapes from each other. 
But we will see that the measures turn out to give equivalent $L^2$ conditions.  
   More precisely,  we first recall the above positive results  \footnote{For the purpose of this paper, we recall here only the case of $Y$ being irreducible of codimension $1$. For the full statements, see the original papers. }   together with their measures  ${dV_Y}$'s : 
 
 \begin{itemize}
 \item
  In the $L^2$ extension theorem  \cite[Thm 2.1]{B96} and \cite[Thm. 4.1]{D00},  the measure ${dV_Y}^1$ was given as  $\displaystyle \frac{1}{\abs{df}^2} dV$ where $f=0$ is an equation defining $Y$ and  $dV$ is a smooth volume form on $Y$.

\item
 In the $L^2$ extension theorem  \cite[Main Thm.]{K10},  the measure ${dV_Y}^2$ was given in terms of `Kawamata metric'. For $Y$ of codimension $1$,  it can be equivalently given in terms of a divisor called the `different'  (see \eqref{ad} in \S 2 and Remark~\ref{kawadiff}).

 \item
 In the $L^2$ extension theorem  \cite[Thm. 2.8]{D15} and \cite[Thm. 2.1]{GZ15},  the measure  ${dV_Y}^3$ was defined as the Ohsawa measure  $dV[\psi]$ (see Definition~\ref{iber}) which generalizes a construction from \cite{O5}. 
 
 \end{itemize}

  As mentioned in \cite{D15}, ${dV_Y}^3$ and ${dV_Y}^1$  give the same $L^2$ conditions since their quotient is locally bounded.
    We will show that ${dV_Y}^3$ and ${dV_Y}^2$ also give the same $L^2$ conditions : this is the content of Theorem~\ref{dvpsi3} which we will discuss in the next subsection. This will complete one part of our answer to Question A.  For the other part concerning negative results, we will come back to Question A in \S 1.3 using the same viewpoint of singular volume forms $dV_Y$.

 As a representative of these positive results, we recall the statement of \cite[Thm. 2.8]{D15} in the special case (for all we need in this paper) of $Y$ of codimension $1$ and $X$ Stein: see  Theorem~\ref{extn} for more details.

\begin{thm}[=Theorem~\ref{extn}, J.-P. Demailly]\cite[Thm. 2.8]{D15}\label{extn0}
 Let $X$ be a Stein manifold and let $Y \subset X$ be an irreducible hypersurface. Let $s \in H^0 (X, \OO(Y))$ be a defining holomorphic section of $Y$.  Let  $\psi = \log \abs{s}^2$ be a quasi-psh function where the length $\abs{s}$ is taken with respect to a smooth hermitian metric of $\OO(Y)$. Under the curvature conditions \eqref{curv},  if a holomorphic function $u$ on $Y$ satisfies the finiteness of the RHS of the following \eqref{est1}, then there exists a holomorphic function $\ut$ on $X$ such that $\ut|_Y = u$ and satisfying \eqref{est1}:

\begin{equation}\label{est1}
  \int_{X} \abs{\ut}^2  e^{-\vp} \gamma (\delta \psi) e^{-\psi} dV_X  \le  \frac{34}{\delta} \int_{\yr}  \abs{u}^2 e^{-\vp}|_Y dV [\psi] 
\end{equation}
\noi where $\gamma$ is as defined in \eqref{gam} so that $\gamma (\delta \psi) e^{-\psi}$ is comparable to the Poincar\'e weight $\frac{1}{\abs{s}^2 (-\log \abs{s})^2}$.
 \end{thm}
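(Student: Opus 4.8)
The plan is to prove \eqref{est1} by the twisted $\db$-method in the tradition of Ohsawa--Takegoshi, arranged so that the singular weight $\gamma(\delta\psi)\eps$ (comparable to the Poincar\'e weight $\frac{1}{\abs{s}^2(-\log\abs{s})^2}$) is handled and the right-hand side is identified with the Ohsawa measure. First I would reduce to the regular locus: since $\ys$ is a proper analytic subset it carries no mass for $dV[\psi]$, so the integral on the right is genuinely an integral over $\yr$, and it suffices to produce an extension whose norm is controlled by the regularized mass of the correction in a shrinking neighborhood of $Y$. Because $X$ is Stein I may also assume, after exhausting $X$ by relatively compact Stein subdomains and passing to a limit at the end, that the hermitian metric on $\OO(Y)$, the weight $\vp$, and all the relevant geometry are as regular as needed.

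Next I would build an approximate extension. Starting from a holomorphic $u$ on $\yr$ with finite right-hand side, I extend it to a holomorphic function $U$ on $X$ (possible since $X$ is Stein) and multiply by a cutoff $\theta_\ep$, depending on $\psi=\log\abs{s}^2$ and supported where $\abs{s}^2<\ep$; set $u_\ep=\theta_\ep U$. This $u_\ep$ is smooth, agrees with $u$ along $Y$, and its defect $\db u_\ep=(\db\theta_\ep)\,U$ is concentrated in an annular region collapsing onto $Y$. The key point, which is exactly the content of Definition~\ref{iber}, is that the $L^2$ norm of $\db u_\ep$ against $\evp\gamma(\delta\psi)\eps\,dV_X$ converges, as $\ep\to 0$, to a constant multiple of the residue integral $\int_{\yr}\abs{u}^2\evp|_Y\,dV[\psi]$.

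The analytic core is a twisted Bochner--Kodaira--Nakano--H\"ormander estimate. I would introduce the auxiliary data built from $\gamma$ (as in \eqref{gam}) and the parameter $\delta$, and apply the twisted $L^2$ identity for $\db$ with weight $\vp+\psi$ and twist governed by $\gamma(\delta\psi)$. The whole point of the particular choice of $\gamma$ is that the complex Hessian of $-\psi=-\log\abs{s}^2$, controlled by the Chern curvature of $\OO(Y)$, combines with $\gamma'(\delta\psi)$ to produce a strictly positive ``gain'' term: the curvature conditions \eqref{curv} make the total curvature operator nonnegative, while this gain dominates the error coming from $\db\theta_\ep$. Solving $\db v_\ep=\db u_\ep$ with the resulting estimate then yields a bound of the form $\int_X\abs{v_\ep}^2\evp\gamma(\delta\psi)\eps\,dV_X\le\frac{34}{\delta}\int_{\yr}\abs{u}^2\evp|_Y\,dV[\psi]+o(1)$, where the numerical factor $\frac{34}{\delta}$ is the outcome of optimizing the one-variable ODE satisfied by $\gamma$.

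Finally I would set $\ut_\ep=u_\ep-v_\ep$, which is holomorphic since $\db v_\ep=\db u_\ep$. Finiteness of $\int_X\abs{v_\ep}^2\eps\,dV_X=\int_X\abs{v_\ep}^2\abs{s}^{-2}dV_X$ forces $v_\ep$ to vanish along $Y$, so $\ut_\ep|_Y=u_\ep|_Y=u$. Letting $\ep\to 0$ and exhausting $X$, the uniform bound lets me extract a weakly convergent subsequence whose limit $\ut$ is holomorphic, restricts to $u$, and satisfies \eqref{est1}. I expect the main obstacle to be the third step together with the identification in the second: securing the sharp constant $\frac{34}{\delta}$ demands very careful tracking of the error terms in the twisted estimate, and matching the limit of the regularized neighborhood integrals with the residue definition of $dV[\psi]$ is the delicate technical heart of the argument, since it is precisely here that the choice of the Ohsawa measure, rather than a less singular one, becomes essential.
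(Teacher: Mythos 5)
Your proposal is correct and follows essentially the same route as the paper: the paper's proof of Theorem~\ref{extn} simply invokes Demailly's twisted $\db$-method from \cite[Thm. 2.8]{D15} (smooth extension cut off by $\theta(\psi-t)$, twisted H\"ormander estimate under \eqref{curv}, limit $t\to-\infty$ followed by Stein exhaustion), which is exactly the argument you sketch. The single point the paper flags as needing modification --- the convergence of the annular integrals $\int_{\{t<\psi<t+1\}}\abs{\tilde f}^2e^{-\psi}\,dV_X$ to $\int_{\yr}\abs{f}^2\,dV[\psi]$ via Definition~\ref{iber}, under the weaker ``generically lc along $Y$'' hypothesis --- is precisely the step you identify as the technical heart.
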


\subsection{Local integrability of the Ohsawa measure}
  In order to state Theorem~\ref{dvpsi3},  let again $X$ be a complex manifold and $Y$ an irreducible hypersurface. Let $\psi$ be a quasi-psh function with \emph{poles along the divisor} $Y$ (in the sense as in the beginning of \S 3). Let $dV[\psi]$ be the Ohsawa measure   with respect to a smooth volume form $dV_X$.
    We will say that a measure is locally integrable if it has locally finite mass. The following is our first main result. 

\begin{theorem}\label{dvpsi3}
 
  Let $B$ be a $\QQ$-divisor on $X$ such that the support of $B$ does not contain $Y$. Write 
  $B = B_+ - B_-$  with $B_+ , B_- \ge 0$  \footnote{It does not matter for our purpose even if $B_+$ and $B_-$ share components.}.  Let $\vp_B := \vp_{B_+} - \vp_{B_-}$ where $\vp_{B_+}$ and $\vp_{B_-}$ are quasi-psh functions with poles along the effective divisors $B_+$ and $B_-$ respectively. Then the measure $e^{-\vp_B} dV[\psi]$ is locally integrable if and only if the pair $(Y^{\nu}, \Diff B)$ is klt (where $Y^{\nu}$ is the normalization of $Y$). 

\end{theorem}

Here klt (Kawamata log terminal) is a fundamental notion of singularities in algebraic geometry and  the divisor $\Diff 0$ is the `different', a well-known correction term for the lack of usual adjunction  : see \S 2.  
 The special case when $B=0$  gives  the following 

\begin{cor}\label{dvpsi}

 In the above setting,  $dV[\psi]$ is locally integrable if and only if $Y$ is normal and the pair $(Y,0)$ has canonical singularities. 
\end{cor}

See also Remark~\ref{Li21}. Our understanding is that such a geometric characterization in Theorem~\ref{dvpsi3} could be rather surprising from the viewpoint of pure analysis.  Indeed, originally the concept of the Ohsawa measure arose within the contexts of  analysis~\cite{O4}, \cite{O5} motivated by the theory of interpolation and sampling (cf. \cite{Se93}, \cite{O20}). In this regard, Theorem~\ref{dvpsi3} can be seen as an instance of application of algebraic geometry to several complex variables. 
 
The idea of the proof of Theorem~\ref{dvpsi3} is as follows. The source of relation between  $\Diff$ and $dV[\psi]$ lies in that both of them are related to Poincaré residues when $Y$ is smooth. In general, the effect of resolution of singularities should be accounted. We use the important insight of Demailly~\cite{D15}  that the measure $dV[\psi]$ can be expressed as the direct image (along the resolution map) of a measure with poles along some divisor $\Gamma$ appearing in a log resolution (cf. \cite[p.5]{KM98}) of $(X, Y+B)$. We  combine this with the fact that the different $\Diff(B)$ is given as the divisorial pushforward of the same divisor $\Gamma$ along the log resolution.  Along the way, we revisit the fundamental properties of the Ohsawa measure in a detailed manner in \S 3.

\begin{remark1}

The hypersurface $Y$ is assumed  irreducible in Theorem~\ref{dvpsi3}, which is sufficient for the purpose of this paper. It is certainly possible to give more statements in the case of reducible $Y$, e.g. using $B$ in Theorem~\ref{dvpsi3}.  Here we do not pursue such generality in this direction since,  in a much more general setting of general codimension in \cite{K21},  such a  geometric characterization of local integrability of the Ohsawa measure will be given. \footnote{Some considerations based on \cite{D15}  in \S 3 of this paper are  used in \cite{K21}. }

\end{remark1}

\subsection{Inversion of adjunction}
 We now turn to more of algebraic geometry, where Theorem~\ref{dvpsi3} and Corollary~\ref{dvpsi} will lead to applications. In the minimal model program (MMP) and higher dimensional algebraic geometry (cf. \cite{KM98}), it is of fundamental importance to understand singularities of varieties and divisors. Let $X$ be a normal complex algebraic variety and $Y$ a prime divisor (i.e. an irreducible hypersurface taken as a divisor with coefficient $1$) on $X$. Let $B$ be a $\QQ$-divisor (not containing $Y$ in its support) on $X$ such that $K_X + Y +B$ is $\QQ$-Cartier. Consider the pair $(X, Y+ B)$ (without any further assumption such as log-canonical) where $Y$ appears with coefficient $1$. When $X$ and $Y$ are smooth, we have the usual adjunction formula $(K_X + Y + B)|_Y = K_Y + B|_Y$. In general, $Y$ is possibly non-normal and $B|_Y$ should be generalized to a divisor called the `different' $\Diff(B)$ which is defined on the normalization $Y^{\nu}$ (see \eqref{ad}). 

The following celebrated inversion of adjunction theorem  plays a crucial role in MMP by comparing singularities of varieties  in different dimensions (due to Shokurov~\cite{S92} in dimension 3 and to Koll\'ar~\cite[\S 17]{Ko92} in general).

\begin{theorem}[Inversion of adjunction]\label{inv} \cite{Ko92, Ko97}
 Assume that $B$ is effective. The pair $(Y^{\nu}, \Diff(B) )$ is klt if and only if the pair $(X, Y+ B)$ is plt in a neighborhood of $Y$. 
\end{theorem}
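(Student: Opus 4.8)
My plan is to deduce inversion of adjunction from the measure-theoretic characterization in Theorem~\ref{dvpsi}, or rather its $B$-twisted generalization Theorem~\ref{dvpsi3}, which identifies local integrability of the ($B$-weighted) Ohsawa measure $dV[\psi]$ with klt-ness of $(\yn, \Diff(B))$. The entire problem then reduces to establishing the single equivalence
\[
dV[\psi] \text{ is locally integrable near } Y \iff (X, Y+B) \text{ is plt in a neighborhood of } Y,
\]
after which chaining with Theorem~\ref{dvpsi3} yields the theorem. Since the Ohsawa measure is constructed on a complex manifold, I would first pass to the essential analytic case where $X$ is smooth, and fix a common log resolution $\pi : X' \to X$ of $(X, Y+B)$, writing $K_{X'} + Y' + \Gamma = \pi^{*}(K_X + Y + B)$, where $Y'$ is the smooth strict transform of $Y$ and $\Gamma = \sum_j c_j \Gamma_j$ collects the $\pi$-exceptional divisors together with the strict transform of $B$. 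The effectivity of $B$ is used here to produce a quasi-psh weight $\vp_B$ with poles along $B$ against which the twisting is carried out.

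The strategy is to read both conditions off the coefficients $c_j$. By definition, $(X, Y+B)$ is plt near $Y$ precisely when $c_j < 1$ for every $\pi$-exceptional $\Gamma_j$ whose center meets a neighborhood of $Y$. On the other hand, adjunction on the smooth $X'$ gives $(K_{X'}+Y'+\Gamma)|_{Y'} = K_{Y'} + \Gamma|_{Y'}$, and pushing $\Gamma|_{Y'}$ forward along $\pi|_{Y'} : Y' \to \yn$ produces exactly $\Diff(B)$; this is the divisorial-pushforward description recalled in \S2 and used to prove Theorem~\ref{dvpsi}. Combined with Demailly's expression of $dV[\psi]$ as the direct image of a residue measure with poles along $\Gamma|_{Y'}$, local integrability of $dV[\psi]$ is equivalent to $c_j < 1$ for every $\Gamma_j$ that actually meets $Y'$. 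The forward (``adjunction'') direction is then immediate: if $(X,Y+B)$ is plt near $Y$, then in particular $c_j < 1$ for those $\Gamma_j$ meeting $Y'$, so $dV[\psi]$ is locally integrable and $(\yn,\Diff(B))$ is klt by Theorem~\ref{dvpsi3}.

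The content lies in the converse (``inversion'') direction, and the gap is exactly the $\pi$-exceptional divisors $\Gamma_j$ whose center meets $Y$ but which are disjoint from $Y'$: such a divisor is invisible both to $dV[\psi]$ and to $\Diff(B)$, yet could a priori carry $c_j \ge 1$ and destroy plt. Ruling this out is the main obstacle, and it is the analytic incarnation of the Koll\'ar--Shokurov connectedness principle — one must show that the non-klt locus of $(X,Y+B)$ near $Y$ is connected and therefore meets $Y'$, so that any offending divisor is in fact seen on $Y'$. My plan is to supply this step analytically: assuming $(\yn,\Diff(B))$ klt, the function $u \equiv 1$ is $L^2$ against $e^{-\vp_B}\,dV[\psi]$, so by the $L^2$ extension Theorem~\ref{extn0} it extends to $\ut$ on $X$ with $\ut|_Y = 1$ and
\[
\int_X \abs{\ut}^2 e^{-\vp_B}\, \gamma(\delta\psi)\, e^{-\psi}\, dV_X < \infty ,
\]
the weight being comparable to the Poincar\'e weight $\tfrac{1}{\abs{s}^2(\log\abs{s})^2}$ times $e^{-\vp_B}$; the non-vanishing of $\ut$ along $Y$ then propagates the resulting integrability bound off of $Y'$. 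I expect, however, that the extension alone does not close the loophole created where $\ut$ may vanish on a divisor meeting $Y$, and that the genuine crux is the connectedness statement, which I would establish directly via Nadel (Kawamata--Viehweg) vanishing on $X'$ applied to $\lceil -\Gamma \rceil$. All remaining work is the discrepancy bookkeeping already packaged in Theorem~\ref{dvpsi3}.
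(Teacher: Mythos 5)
Your setup is the right one --- reduce to ``klt on $\yn$ $\Rightarrow$ plt near $Y$'' via Theorem~\ref{dvpsi3}, extend $u\equiv 1$ by Theorem~\ref{extn0} --- and you have correctly put your finger on the real difficulty: exceptional divisors whose center meets $Y$ but which miss the strict transform $Y'$ are invisible to both $dV[\psi]$ and $\Diff(B)$. But your resolution of that difficulty is wrong on two counts. First, you abandon the analytic argument at the decisive moment and propose to ``establish the connectedness statement directly via Nadel (Kawamata--Viehweg) vanishing on $X'$ applied to $\lceil -\Gamma\rceil$.'' That \emph{is} the classical algebraic proof of Koll\'ar--Shokurov; the entire point of Question~B and Theorem~\ref{main} is that the $L^2$ extension makes this step unnecessary, so importing it means you have not produced an analytic proof at all. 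Second, your stated worry --- that ``$\ut$ may vanish on a divisor meeting $Y$'' --- cannot occur where it matters: $\ut|_Y=1$ forces $\ut$ to be a unit in $\OO_{X,x}$ for \emph{every} $x\in Y$, so whatever ideal sheaf $\ut$ is shown to belong to is automatically trivial at every point of $Y$.

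The missing idea is the adjoint ideal. The extension does not merely certify integrability ``along $Y'$'': the left-hand side of the estimate \eqref{eq1} is an integral over a full neighborhood of each point of $Y$ \emph{in $X$}, against the weight $\gamma(\delta\psi)e^{-\psi}e^{-\vp_B}$, which is comparable to the Poincar\'e-type weight $\frac{1}{|s|^2(\log|s|)^2}e^{-\vp_B}$ appearing in \eqref{adjn}. Hence the finiteness of the LHS places $\ut$ in the analytic adjoint ideal $\AAdj(X,Y;\vp_B)$, which equals the algebraic adjoint ideal $\Adj(X,Y;B)=f_*\OO_{X'}(K_{X'}-\lfloor f^*(K_X+Y+B)\rfloor+Y')$ by Proposition~\ref{AAAdj}. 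It is this pushforward, computed over the \emph{whole} fiber $f^{-1}(x)$, that sees your offending divisors: if some exceptional $\Gamma_j$ over $x\in Y$ had $c_j\ge 1$, every germ in $\Adj(X,Y;B)_x$ would have to vanish along $\Gamma_j$ and hence at $x$, contradicting $\ut(x)=1$. Triviality of $\Adj(X,Y;B)$ along $Y$ is equivalent to $(X,Y+B)$ being plt near $Y$ (Takagi's characterization, quoted after Definition~\ref{Adj}), which closes the argument with no appeal to connectedness. In short: the divisors disjoint from $Y'$ are detected by the $X$-side integral on the left of the extension estimate, not by the $Y$-side integral on the right, and the adjoint ideal is the bookkeeping device that converts that detection into plt-ness.
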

 
Here again, klt (Kawamata log terminal) and plt (purely log terminal) are fundamental notions of singularities in MMP: see \S 2.  The proof of Theorem~\ref{inv} uses an  argument involving relative vanishing theorems on a log resolution, called the connectedness lemma~\cite[Thm. 7.4]{Ko97}.

On the other hand, in the special case when $X$ and $Y$ are smooth,  a  simpler version  of Theorem~\ref{inv} follows immediately and directly from $L^2$ extension :  if $(Y, B|_Y)$ is klt, then $(X, B)$ is klt in a neighborhood of $Y$ \cite[Cor. 7.2.1]{Ko97} \footnote{Also see \cite[Thm. 2.5]{DK01} for a related more general statement.}.  It is natural to ask what happens to such a direct argument when $Y$ becomes singular.

 \;

\noi \textbf{Question B.} (J. Koll\'ar)
 \emph{Does the inversion of adjunction Theorem~\ref{inv} have an analytic proof using $L^2$ extension theorems that generalizes the direct argument of  \cite[Cor. 7.2.1]{Ko97} extending a function from $Y$ to $X$?} \footnote{In particular, here we do not mean  a proof which uses $L^2$ extension just \emph{somewhere} in the argument. } 

\;

 This question can be regarded as asking what has been lacking  in our understanding of the $L^2$ extension theorems in this regard.  Now using Theorem~\ref{dvpsi3}, we have the following second main result. 
 
\begin{thm}\label{main}

  The answer to Question B   is yes when $X$ is smooth. 

\end{thm}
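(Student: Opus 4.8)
The plan is to reproduce, in the singular setting, the classical $L^2$ proof of the smooth case \cite[Cor.~7.2.1]{Ko97}: extend the constant function $u=1$ from $Y$ to $X$ via the Ohsawa--Takegoshi type Theorem~\ref{extn0}, and read off plt-ness from the fact that the extension is a unit. The one genuinely new ingredient is that the measure on the right-hand side of \eqref{est1} must now be the \emph{Ohsawa measure} $dV[\psi]$, whose local integrability is converted into the algebraic klt condition on the normalization by Theorem~\ref{dvpsi3}. Since Theorem~\ref{inv} is local along $Y$ and $X$ is smooth, I would work on a small Stein open set $U \subset X$, fix a holomorphic equation $f$ of $Y$ with $\psi = \log \abs{f}^2$, and a psh potential $\vp_B$ of the effective $\QQ$-divisor $B$ (so that $e^{-\vp_B}$ defines the multiplier ideal $\JJ(X,B)$); after shrinking $U$ the curvature hypotheses \eqref{curv} of Theorem~\ref{extn} are satisfied.

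The first step is to recast Theorem~\ref{inv} in terms of the adjoint ideal sheaf $\Adj_Y(X,B) \subseteq \OO_X$, defined analytically by the local finiteness of the adjoint integral $\int \abs{h}^2 e^{-\vp_B} \gamma(\delta\psi) e^{-\psi} \, dV_X$, that is, the left-hand side of \eqref{est1}. I will use the standard fact that $(X, Y+B)$ is plt in a neighborhood of $Y$ if and only if $\Adj_Y(X,B) = \OO_X$ near $Y$. Restriction to $\yr$, where the normalization $\nu : \yn \to Y$ is an isomorphism, maps $\Adj_Y(X,B)$ into the multiplier ideal $\JJ(\yn, \Diff(B))$, the target being identified through Theorem~\ref{dvpsi3}; the surjectivity of this residue map is exactly the content of the extension Theorem~\ref{extn0}.

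With this dictionary both implications of Theorem~\ref{inv} reduce to a single application of the extension theorem to $u = 1$. If $(\yn, \Diff(B))$ is klt, then by Theorem~\ref{dvpsi3} the mass $\int_{\yr \cap U} e^{-\vp_B}|_Y \, dV[\psi]$ is finite, i.e. the right-hand side of \eqref{est1} for $u = 1$ is finite; Theorem~\ref{extn0} then yields $\ut \in \OO(U)$ with $\ut|_Y = 1$ and finite adjoint integral, so $\ut \in \Adj_Y(X,B)$. As $\ut(p) = 1 \neq 0$ for every $p \in Y \cap U$, the germ $\ut$ is a unit, whence $\Adj_Y(X,B) = \OO_X$ near $Y$ and $(X, Y+B)$ is plt near $Y$. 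Conversely, if $(X, Y+B)$ is plt near $Y$, then $\Adj_Y(X,B) = \OO_X$, so $1 \in \Adj_Y(X,B)$; restriction along $\nu$ carries it to $1 \in \JJ(\yn, \Diff(B))$, that is, $(\yn, \Diff(B))$ is klt.

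I expect the main obstacle to be the second step: matching Demailly's right-hand side measure, which is defined analytically on $\yr \subset X$, with the algebraically defined multiplier ideal $\JJ(\yn, \Diff(B))$ on the normalization. This is precisely where the different enters and where Theorem~\ref{dvpsi3} does the real work, through the log resolution of $(X, Y+B)$ and the comparison of the divisor $\Gamma$ governing $dV[\psi]$ with the divisorial pushforward that computes $\Diff(B)$. The remaining points are routine: that the $L^2$ norms only see $\yr$, where $\nu$ is biholomorphic and $1$ pulls back to $1$; that the effectivity of $B$ makes $\vp_B$ psh so that the weight is admissible in Theorem~\ref{extn0}; and that the Poincar\'e-type weight $\gamma(\delta\psi) e^{-\psi}$ on the left of \eqref{est1}, being non-integrable across $Y$, plays no role in the conclusion that $\ut$ is a unit.
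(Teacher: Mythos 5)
Your architecture is the same as the paper's: localize to a Stein open set, use Theorem~\ref{dvpsi3} to see that $u=1$ has finite norm against $e^{-\vp_B}\,dV[\psi]$, extend it by Theorem~\ref{extn0}, and read off plt-ness from the extension being a unit lying in an adjoint ideal. The gap is in the step you wave through as a ``standard fact'': that $(X,Y+B)$ is plt near $Y$ if and only if the ideal of germs $h$ with $\int \abs{h}^2 e^{-\vp_B}\gamma(\delta\psi)e^{-\psi}\,dV_X<\infty$ is trivial. The actual standard fact (Takagi) is that plt is equivalent to triviality of the \emph{algebraic} adjoint ideal $\Adj(X,Y;B)$, and identifying that with an analytically defined ideal is a theorem of Guenancia-type which the paper has to generalize and reprove (Proposition~\ref{AAAdj}); crucially, the correct analytic definition uses the weight $e^{-(1+\ep)\vp_B}$ for some $\ep>0$, not $e^{-\vp_B}$ (Definition~\ref{aaadj} and the remark after it). Your $\ep=0$ ideal contains $\Adj(X,Y;B)$ but can be strictly larger: on a log resolution, an exceptional divisor $E$ lying over $Y$ with discrepancy exactly $-1$ contributes, at a generic point of $E$, a pulled-back density comparable to $r^{-1}(\log r)^{-2}\,dr$, which \emph{is} integrable; so the constant $1$ can have finite $\ep=0$ adjoint integral near such points even though $(X,Y+B)$ is not plt there. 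Ruling this out without the $\ep$ would force you back to the connectedness lemma, i.e.\ the very algebraic argument the theorem is supposed to bypass. The repair is exactly the paper's: run the whole argument with the weight $e^{-(1+\ep)\vp_B}$ --- legitimate because klt is an open condition, so $(Y^\nu,\Diff((1+\ep)B))$ is still klt for small $\ep$ and Theorem~\ref{dvpsi3} still gives finiteness of the right-hand side --- and then invoke the equality $\AAdj(X,Y;\vp_B)=\Adj(X,Y;B)$, which is a genuine monomial-by-monomial computation on the log resolution rather than a citation.

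A smaller issue: for the converse direction (plt implies $(Y^\nu,\Diff(B))$ klt) you propose an analytic argument via ``restriction maps $\Adj_Y(X,B)$ into $\JJ(Y^\nu,\Diff(B))$,'' but no such trace inequality is supplied by the extension theorem, which only bounds the ambient norm by the restricted one. This direction is the easy, purely algebraic one (Proposition~\ref{prop1}(3)), and the paper simply cites it; you should do the same rather than manufacture an unproved analytic statement.
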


The proof of Theorem~\ref{main} uses the $L^2$ extension Theorem~\ref{extn0} and a generalized version of the work of Guenancia~\cite{G12}  on analytic adjoint ideals. \footnote{In fact,  it is likely that similar ideas may lead to answering Question B in general. An $L^2$ extension theorem for this generality is already available from \cite{K10}, see Remark~\ref{kawadiff}.  }

While the above  algebraic method  is  efficient, the analytic arguments for Theorem~\ref{main} can be also used in the future, for example, to deal with the singularity of pairs involving plurisubharmonic functions  (as generalization of the classical pairs taken with divisors) : log resolutions may not exist for such pairs.

 \subsection{Some negative results on $L^2$ extension}
  Finally, going back to Question A, we revisit some negative results on $L^2$ extension theorems from an irreducible singular hypersurface. 
The main result of \cite{GL18} states that when $X$ is a (small enough) ball at $0 \in \CC^n$ and $0 \in Y \subset X$ an irreducible hypersurface, `$L^2$ extension holds' if and only if $0$ is a nonsingular point of $Y$. In fact, `$L^2$ extension' there in \cite{GL18} refers implicitly to a restrictive version in that, in the place of $dV_Y$ in \eqref{est0}, a smooth volume form $\wt{dV_Y}$ on $Y$ (i.e. the restriction of a smooth differential form of the right degree from $X$) is to be used. Note that such $\wt{dV_Y}$ is always locally integrable on $Y$ by the fundamental theorem of Lelong~\cite{L57} (cf. \cite[III (2.6)]{DX}).

 From the viewpoint of the Ohsawa measure $dV[\psi]$, the statements of  \cite[Thm. 1.2, Thm. 1.3]{GL18}  in the case of codimension $1$, can be alternatively reinterpreted  as follows. 
  
 \begin{theorem}[Q. Guan - Z. Li]\cite{GL18}\label{glgl}
  Let $Y \subset X$ be as in the $L^2$ extension Theorem~\ref{extn0}.

 \noi (1) If $Y$ is singular, then one cannot replace the Ohsawa measure $dV[\psi]$ in Theorem~\ref{extn0} with a smooth volume form on $Y$.

 \noi (2) If $Y$ is singular with a point $p \in Y$ such that $\mult_p Y \ge \dim X$, then one cannot replace the Ohsawa measure $dV[\psi]$ in  Theorem~\ref{extn0} with a locally integrable measure on $Y$. 
 
 \end{theorem}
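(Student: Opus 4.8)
The plan is to deduce both statements from the \emph{sharpness} of Theorem~\ref{extn0}, reformulating "$dV[\psi]$ can be improved" as a comparison of measures. First I would record the dichotomy underlying \eqref{est1}: a holomorphic function $u$ on $\yr$ admits an extension $\ut$ with finite left-hand side of \eqref{est1} \emph{if and only if} $\int_{\yr}|u|^2\evp\,dV[\psi]<\infty$. The "if" direction is Theorem~\ref{extn0} itself; the "only if" direction is the trace/restriction property characterizing the Ohsawa measure as the trace of the Poincar\'e-weighted $L^2$ space on $X$, so that finiteness of the left side of \eqref{est1} forces finiteness of the right side. Granting this, to show that a measure $\mu$ cannot replace (improve) $dV[\psi]$ in \eqref{est1} it suffices to exhibit a single holomorphic $u$ on $Y$ with $\int_Y|u|^2\evp\,\mu<\infty$ but $\int_{\yr}|u|^2\evp\,dV[\psi]=\infty$: such a $u$ has finite $\mu$-norm and yet, by the dichotomy, no admissible extension.

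For (2), let $\mu$ be an arbitrary locally integrable measure and take the test function $u\equiv 1$ near $p$, so that $\int_Y|u|^2\mu<\infty$ automatically. It then remains to prove that $dV[\psi]$ is \emph{not} locally integrable near $p$, which by Theorem~\ref{dvpsi} is the assertion that $(\yn,\Diff 0)$ is non-klt. I would obtain this from the multiplicity hypothesis by one blow-up: writing $\pi$ for the blow-up of $p\in X$ with exceptional divisor $E$, one computes the discrepancy $a(E;X,Y)=(\dim X-1)-\mult_p Y\le -1$, so $(X,Y)$ is not plt near $Y$; by the inversion of adjunction Theorem~\ref{inv} (applied with $B=0$) the pair $(\yn,\Diff 0)$ is then not klt, and Theorem~\ref{dvpsi} gives the non-integrability. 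Hence $u\equiv 1$ is a counterexample valid simultaneously for every locally integrable $\mu$.

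For (1) the candidate replacement $\wt{dV_Y}$ is a smooth volume form, hence locally integrable by Lelong's theorem, and the point is that $dV[\psi]$ is strictly more singular than $\wt{dV_Y}$ at every singular point of $Y$. If $dV[\psi]$ already fails to be locally integrable near the singular point --- which is automatic when $Y$ is non-normal there, since a plt pair $(X,Y)$ has $Y$ normal and we may again invoke Theorems~\ref{inv} and~\ref{dvpsi} --- then $u\equiv1$ is a counterexample exactly as in (2). In the remaining case ($Y$ normal at $p$, so $dV[\psi]$ locally integrable) I would use the residue description $dV[\psi]\asymp|ds|^{-2}\,\wt{dV_Y}$ valid on $\yr$: since $p\in\ys$ forces $ds(p)=0$, the density $|ds|^{-2}$ blows up, so $dV[\psi]$ is not comparable to any smooth volume form near $p$ and cannot serve, even after rescaling, as the sharp weight in \eqref{est1}. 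To upgrade this non-comparability to an actual non-extendable test function I would pass to the normalization: $\nu^*\wt{dV_Y}$ stays bounded (and vanishes where $\nu$ ramifies), whereas $\nu^*dV[\psi]$ is concentrated along $\Diff 0$, and I construct $u$ as the restriction of a holomorphic function whose pullback vanishes just enough along $\Diff 0$ to lie in $\OO_Y\subset\nu_*\OO_{\yn}$ yet not enough to make $\int_{\yr}|u|^2 dV[\psi]$ converge.

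The main obstacle is the fine singularity analysis of $dV[\psi]$ along $\ys$ needed for (1): justifying the density comparison $dV[\psi]\asymp|ds|^{-2}\wt{dV_Y}$ and, more importantly, the behaviour of $\nu^*dV[\psi]$ relative to $\Diff 0$. Here I would rely on the resolution description from the proof of Theorem~\ref{dvpsi}, expressing $dV[\psi]$ as the direct image of a measure with poles along the divisor $\Gamma$ of a log resolution of $(X,Y)$ and reading off $\Diff 0$ as the divisorial pushforward of $\Gamma$. The second delicate point, required for the strongest form of (1), is to ensure that the test function is genuinely the restriction of a holomorphic function on the possibly non-normal $Y$, i.e. that its prescribed vanishing along $\Diff 0$ is compatible with the conductor conditions cutting out $\OO_Y$ inside $\nu_*\OO_{\yn}$.
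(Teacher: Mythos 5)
First, note that the paper does not prove Theorem~\ref{glgl} itself: it is quoted from \cite{GL18}, and the paper only explains how the statements translate into the language of $dV[\psi]$ (part (1) in the klt case rests on a Skoda-division argument in \cite{GL18}; part (2) is subsumed by the paper's own Theorem~\ref{neg3}). Measured against those arguments, your proposal has a genuine gap at its foundation: the ``only if'' half of your dichotomy, i.e.\ that finiteness of the left-hand side of \eqref{est1} forces $\int_{\yr}|u|^2e^{-\vp}\,dV[\psi]<\infty$. This is not a ``trace property built into the Ohsawa measure'': the measure is defined by the limit formula \eqref{dvp}, and that formula does not yield the implication by any soft estimate. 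Concretely, with $a_k=\int_{\{-k<\psi<-k+1\}}|\ut|^2e^{-\vp}e^{-\psi}\,dV_X$, the left-hand side of \eqref{est1} is comparable to $\sum_k a_k/k^2$ because of the damping factor $\gamma(\delta\psi)\sim\psi^{-2}$, while the definition of $dV[\psi]$ only relates $\int_{\yr}|\ut|^2e^{-\vp}\,dV[\psi]$ to $\liminf_k a_k$; finiteness of $\sum_k a_k/k^2$ is perfectly compatible with $a_k\to\infty$ (e.g.\ $a_k\sim\log k$). So even in part (2), where your blow-up computation $a(E;X,Y)=n-1-\mult_pY\le-1$ and the appeal to Theorems~\ref{inv} and~\ref{dvpsi} correctly show that $dV[\psi]$ is not locally integrable (this is exactly Proposition~\ref{notklt} plus the remark following Theorem~\ref{glgl}), you cannot conclude that $u\equiv1$ admits no extension with finite left-hand side. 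The paper closes precisely this step with hard input: a finite left-hand side places $\ut$ in the analytic adjoint ideal $\AAdj(X,Y;0)$, which equals $\Adj(X,Y;0)$ by Proposition~\ref{AAAdj}; since $\ut|_Y=1$ this forces $(X,Y)$ to be plt near $Y$, hence $(Y^\nu,\Diff 0)$ klt, a contradiction. Your proposal never invokes the adjoint ideals, and without them (or the Skoda division theorem used in \cite{GL18}) the argument does not close.

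Second, the construction you sketch for part (1) in the remaining case ($Y$ normal but singular at $p$) does not work as described. There $Y^\nu=Y$ and $\Diff 0=0$ by Proposition~\ref{diff0}, so ``vanishing just enough along $\Diff 0$'' prescribes nothing: the extra singularity of $dV[\psi]$ relative to a smooth $\wt{dV_Y}$ is concentrated on $\ys$, which has codimension at least $2$ in $Y$ and is invisible to any divisor on $Y^\nu$. The actual counterexample in \cite{GL18}, as the paper recounts, is obtained not by adjusting the vanishing of $u$ but by choosing a singular psh weight $e^{-\vp}$ with an isolated pole at $p$, calibrated so that $|u|^2e^{-\vp}$ is $\wt{dV_Y}$-integrable yet not $dV[\psi]$-integrable, and the contradiction is then extracted from the finiteness of the extension's weighted $L^2$ norm via Skoda division --- again exactly the step that your unproven dichotomy is standing in for.
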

 
 Let us explain these statements in more detail: suppose one considers a new statement of $L^2$ extension obtained from Theorem~\ref{extn0} by replacing each appearance of $dV[\psi]$ with a smooth volume form $dV_Y$ on $Y$. Such a new statement would be an improvement of Theorem~\ref{extn0} since the $L^2$ condition $\int_{\yr}  \abs{u}^2 e^{-\vp}|_Y dV_Y  < \infty$ would be definitely easier to satisfy than $\int_{\yr}  \abs{u}^2 e^{-\vp}|_Y dV [\psi] < \infty$. Theorem~\ref{glgl} (1) says that such a new statement is always false. Similarly for Theorem~\ref{glgl} (2).

 Note that under the condition of Theorem~\ref{glgl} (2), the pair $(Y^\nu, \Diff 0)$ is not klt by Proposition~\ref{notklt}, hence $dV[\psi]$ is not locally integrable due to Corollary~\ref{dvpsi}. 
  
 In order to describe the relation with Corollary~\ref{dvpsi}, we first  explain more about Theorem~\ref{glgl} (1) in the case when $Y$ is not smooth but the pair $(Y, 0)$ is klt. Then  $dV[\psi]$ is also  locally integrable on $Y$ by Corollary~\ref{dvpsi}. The proof of \cite[Thm. 1.2]{GL18} chooses a specific holomorphic function $u$ and a psh weight $e^{-\vp}$ such that  $\abs{u}^2 e^{-\vp}$ is locally integrable with respect to the smooth volume form $ dV_Y$ (but not locally integrable with respect to  $dV[\psi]$ since $dV[\psi]$ has some singularity even if it is locally integrable). Under the assumption that $L^2$ extension with the estimate \eqref{est0} holds for this $dV_Y$,  they derive contradiction from the finiteness of the LHS of \eqref{est0} using the Skoda division theorem.

 Using the inversion of adjunction, our third main result generalizes Theorem~\ref{glgl} (2) to every singular hypersurface that is not klt (in the sense that $(Y^\nu, \Diff 0)$ is not klt, cf. Propositions~\ref{notklt}, \ref{diff0}). 

\begin{thm}\label{neg3}
Let $Y \subset X$ be as in the $L^2$ extension Theorem~\ref{extn0}. If  $Y$ is singular such that $(Y^\nu, \Diff 0)$ is not klt,  then one cannot replace the Ohsawa measure $dV[\psi]$ in  Theorem~\ref{extn0} with a locally integrable measure on $Y$. 

 \end{thm}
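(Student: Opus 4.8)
The plan is to argue by contradiction, defeating every candidate measure at once with the single test function $u\equiv 1$. Suppose some locally integrable measure $\mu$ on $Y$ could replace $dV[\psi]$, i.e. the estimate \eqref{est1} holds with $\mu$ in place of $dV[\psi]$ (same left-hand side, the curvature conditions \eqref{curv}, and some constant). Since $\mu$ is locally integrable, for a small coordinate ball and the trivial weight $\vp\equiv 0$ the right-hand side for $u\equiv 1$ is $\int_{\yr}1\,d\mu=\mu(\text{compact})<\infty$. The supposed estimate would then produce a holomorphic $\ut$ on $X$ with $\ut|_Y=1$ and finite energy $\int_X|\ut|^2\gamma(\delta\psi)e^{-\psi}\,dV_X<\infty$. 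Everything therefore reduces to the following key assertion: \emph{if $Y$ is singular with $(\yn,\Diff 0)$ not klt, then no holomorphic $\ut$ with $\ut|_Y=1$ has finite Poincar\'e energy} $\int_X|\ut|^2\gamma(\delta\psi)e^{-\psi}\,dV_X$. Granting this, the displayed extension cannot exist, contradicting the estimate; as $\mu$ was arbitrary, $dV[\psi]$ cannot be improved to a locally integrable measure.

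To prove the key assertion I would first invoke inversion of adjunction: by Theorem~\ref{inv} with $B=0$ (equivalently Theorem~\ref{main}), the hypothesis that $(\yn,\Diff 0)$ is not klt means $(X,Y)$ is not plt in any neighborhood of $Y$, so there is a prime divisor $E$ over $X$ with discrepancy $a(E;X,Y)\le -1$ whose center $Z=c_X(E)$ lies in $Y$. This is the intrinsic replacement for the explicit hypothesis $\mult_p Y\ge\dim X$ of Theorem~\ref{glgl}(2), which by Proposition~\ref{notklt} is just one way of producing such an $E$ (the blow-up of $p$). I would then pass to a log resolution $\pi$ of $(X,Y)$ — conveniently the one used in the proof of Theorem~\ref{dvpsi}, on which $dV[\psi]$ is realized as a direct image along $\pi$ — and compute $\int_X|\ut|^2\gamma(\delta\psi)e^{-\psi}\,dV_X$ upstairs. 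Near a point of $E\cap\wt{Y}$, where $\wt{Y}$ is the strict transform, the pulled-back integrand is, up to a positive smooth factor, $|w_1|^{2a(E)}|w_2|^{-2}\bigl(b_E\log|w_1|+\log|w_2|\bigr)^{-2}$ in adapted coordinates, while $\pi^*\ut\equiv 1$ on $\wt{Y}$ forces $|\pi^*\ut|\equiv 1$ along $E\cap\wt{Y}$; the resulting two-variable integral diverges, giving infinite energy. Equivalently, one establishes the trace inequality $\int_{\yr}\bigl|\ut|_Y\bigr|^2\,dV[\psi]\le C\int_X|\ut|^2\gamma(\delta\psi)e^{-\psi}\,dV_X$ and combines it with Theorem~\ref{dvpsi}: non-klt gives $dV[\psi](\yr)=\int_{\yr}1\,dV[\psi]=\infty$, forcing the energy to be infinite.

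I expect the main obstacle to be the borderline case $a(E;X,Y)=-1$ (for instance $\mult_p Y=\dim X$), where each of $\wt{Y}$ and $E$ has discrepancy exactly $-1$. There the logarithmic factor $(-\log|s|)^{-2}$ in the Poincar\'e weight is, along either divisor separately, just strong enough to make the one-variable integral converge; the divergence appears only at the intersection $E\cap\wt{Y}$, through the two-variable computation above. The crucial point is therefore that the bad divisor $E$ genuinely meets the strict transform $\wt{Y}$ — equivalently, that the non-integrability of $dV[\psi]$ is detected along $\wt{Y}$ — which is exactly what non-klt of $(\yn,\Diff 0)$ guarantees via Theorem~\ref{dvpsi}, and which inversion of adjunction packages as the existence of $E$ with center in $Y$. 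Once this meeting is secured, the remaining estimates are the routine monomial-times-logarithm integrals indicated above.
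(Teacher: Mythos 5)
Your proposal is correct, and its opening reduction is exactly the paper's: assume the improved estimate, apply it to $u\equiv 1$ and $\vp\equiv 0$ on a small Stein neighborhood of a bad point, and extract a holomorphic $\ut$ with $\ut|_Y=1$ and finite Poincar\'e energy $\int_X|\ut|^2\gamma(\delta\psi)e^{-\psi}\,dV_X$. Where you diverge is in how the contradiction is extracted from that finiteness. The paper observes that finiteness of the Poincar\'e energy means precisely $\ut\in\AAdj(X,Y;0)$, invokes Proposition~\ref{AAAdj} to identify this with the algebraic adjoint ideal $\Adj(X,Y;0)$, uses $\ut|_Y=1$ to conclude the ideal is trivial near $Y$, hence $(X,Y)$ is plt there (Takagi's criterion), and finally applies the easy direction of adjunction (Proposition~\ref{prop1}(3)) to get $(\yn,\Diff 0)$ klt. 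You instead run the log-resolution divergence computation directly: non-klt of $(\yn,\Diff 0)$ produces, via the snc criterion for $(Y',(\Delta_{X'}-Y')|_{Y'})$ (equivalently Proposition~\ref{prop1}(2) and \cite[(3.10.2)]{Ko97}), an exceptional $E$ with $a(E;X,Y)\le-1$ that actually meets the strict transform $\wt{Y}$, and then the two-variable integral of $|w_1|^{2a}|w_2|^{-2}(b\log|w_1|+\log|w_2|)^{-2}$ with $a\le-1$ diverges at the corner $E\cap\wt{Y}$ where $|\pi^*\ut|$ is bounded below. This is a legitimate shortcut: it uses only the special case of the monomial computation underlying Proposition~\ref{AAAdj} that is actually needed, at the cost of having to argue explicitly that the bad divisor meets $\wt{Y}$ (which you correctly flag as the crucial point, and which does not follow from ``center contained in $Y$'' alone but does follow from the pushforward description of $\Diff 0$). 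The paper's route buys reusability (the same adjoint-ideal mechanism proves Theorem~\ref{main}); yours is more self-contained and makes the borderline case $a(E;X,Y)=-1$ transparent. One caveat: the ``equivalent'' trace inequality $\int_{\yr}|\ut|_Y|^2\,dV[\psi]\le C\int_X|\ut|^2\gamma(\delta\psi)e^{-\psi}\,dV_X$ is not justified as stated and is likely false without a loss --- a fiberwise sub-mean-value argument transversal to $Y'$ only yields it with an extra factor $(-\log|s|)^{-1}$ on the left, because the fiber mass of the Poincar\'e weight degenerates like $1/(-\log|w_1|)$ near the exceptional divisor; the lossy version still forces divergence when $a\le-1$, so your conclusion stands, but you should not present the clean trace inequality as immediate.
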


 Indeed, there exist singular hypersurfaces (such as $x^2 + y^5 + z^7 = 0$ in $\CC^3$) whose cases are covered by Theorem~\ref{neg3} but not by Theorem~\ref{glgl} (2). 
 
 This paper is organized as follows. In Section 2, we introduce the notions of `singularity of pairs' from algebraic geometry. In Section 3, we define and study the Ohsawa measure. In Section 4, we revisit and  generalize the theory of analytic adjoint ideals due to Guenancia~\cite{G12} for our purpose. In Section 5, combining these ingredients, we complete the proofs of the main results.

\begin{remark1}\label{Li21}

 After this paper was posted on arxiv, the authors were informed by Zhenqian Li who kindly brought to their  attention his recent independent work \cite{Li21}, \cite{Li20} with results closely related to Theorem~\ref{dvpsi3} and Theorem~\ref{main}. He also pointed out slight strengthening  in Corollary~\ref{dvpsi} from its previous version. 
 
 \end{remark1}

 \noi \textbf{Acknowledgments.}  
  We would like to thank R. Berman, J. Kollár and T. Ohsawa for their helpful comments regarding the subject of this paper. We also thank Mario Chan for pointing out some inaccuracies in an earlier version regarding adjoint ideals (cf. \cite{G22}, \cite{C21}).      We would like to thank anonymous referees for useful comments. 
 This research was supported by Basic Science Research Programs through  National Research Foundation of Korea funded by the Ministry of Education: (2018R1D1A1B07049683) for D.K. and (2020R1A6A3A01099387) for H.S. Also H.S. was supported by the Institute for Basic Science (IBS-R032-D1-2022-a00).

\medskip

\section{Singularities of pairs}

  In this section, we will recall some fundamental notions of singularities of pairs from algebraic geometry (cf. \cite{KM98}, \cite{Ko97}). We will treat them in the usual algebraic setting (mainly for the purpose of the exposition on the inversion of adjunction)
  while
  it is well known that these notions also make sense for complex analytic spaces, cf.  \cite[\S 16.2]{GZ17},  \cite[(30)]{Ko21}, \cite[Def. 3.1]{F22}. Note that in our main results, we only use the case of (ambient) complex manifolds.

 Let $X$ be a normal complex variety. 
  Let $\Delta$ be a Weil $\QQ$-divisor on $X$ such that $K_X + \Delta$ is $\QQ$-Cartier.
 Let $f: X' \to X$ be  a proper birational morphism from a normal complex variety $X'$ and let $E$ be a prime divisor on $X'$.
  The \emph{discrepancy} $a(E, X, \Delta)$ of $E$ with respect to $(X, \Delta)$ is defined by the relation $K_{X'} = f^* (K_X + \Delta) + \sum_E a(E, X, \Delta)E$ (also see \cite[\S 3]{F22} for the generality of normal complex analytic spaces $X, X'$).

\begin{remark1}

  In the case of complex manifolds and normal complex analytic spaces, in general one does not have a global canonical divisor which corresponds to the canonical sheaf. However the notion of discrepancies (as defined below) is still globally well-defined, cf. \cite[\S 3]{F22}, \cite[(2.17)]{KM98}. 

\end{remark1}

  The pair $(X, \dt)$ is called \emph{klt} (i.e. Kawamata log terminal) if $a(E, X, \dt) > -1$ for every $E$ appearing in some $X'$ as above. The pair $(X, \dt)$ is called \emph{lc} (i.e. log canonical) if $a(E, X, \dt) \ge -1$ for every such $E$. Also the pair $(X, \dt)$ is called \emph{plt} (i.e. purely log terminal) if $a(E, X, \dt) > -1$ for every such $E \subset X'$ that is exceptional for $f: X' \to X$ with nonempty center on $X$. It is clear that we have the implication: klt $\to$ plt $\to$ lc. 
  
Now suppose that $\Delta = Y + B$, where $Y$ is a prime divisor (with coefficient $1$) and $\Supp B$ does not contain $Y$.  Let $\nu : Y^{\nu} \to Y$ be the normalization of $Y$.  Then a (unique) Weil $\QQ$-divisor $\Diff(B)$ on $Y^\nu$ satisfying  

\begin{equation}\label{ad}
(K_X + Y + B)|_{Y^\nu} \sim_{\QQ} K_{Y^\nu} + \Diff(B)
\end{equation}

\noi is defined and named as the `different' (\cite{S92}, \cite{Ko92}). Here the notation of restriction $|_{Y^\nu}$ means taking restriction to $Y$ first and then pullback by $Y^\nu \to Y$. We refer to \cite[\S 4.1]{Ko13} for a comprehensive treatment of the different including its definition and properties we use in this paper. \footnote{There was an earlier  definition of the different from \cite{S92}, \cite{Ko92} reducing to the dimension $2$ case by taking hyperplane sections. }

  We recall some basic properties of the different.    Let $f: X' \to X$ be a \emph{log resolution} of $(X, Y+ B)$, i.e. a proper birational morphism from a smooth variety $X'$ such that $\Ex(f)$ is a divisor and $\Ex(f) \cup f^{-1} (\Supp (Y+B))$  is a simple normal crossing divisor on $X'$.  Here the exceptional set $\Ex(f)$ is the set of points in $X$ where $f$ is not biregular. 
  Write $K_{X'} + \Delta_{X'}  = f^* (K_X + \Delta) = f^* (K_X + Y+ B)$. Let $Y'$ be the strict transform of $Y$ on $X'$ and $\tilde{f} : Y' \to Y^\nu$ the induced morphism. 
 
\begin{proposition}\label{prop1}

\begin{enumerate}
\item
  We have $K_{Y'} + (\Delta_{X'} - Y')|_{Y'}  \sim_{\QQ} \tilde{f}^*  (K_{Y^{\nu}} + \Diff (B) )$.
 
 \item  We have $ \Diff(B) = \tilde{f}_* ( (\Delta_{X'} - Y')|_{Y'} ) $ where $\tilde{f}_*$ denotes the divisorial pushforward along $\tilde{f}$. 
 
 \item
 
   If $(X, Y+B)$ is plt in a neighborhood of $Y$, then $(Y^\nu, \Diff (B))$ is klt. 
 \end{enumerate}
 \end{proposition}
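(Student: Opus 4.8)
The plan is to derive all three statements from ordinary adjunction on the log resolution $f\colon X'\to X$, using that $X'$ is smooth and that the strict transform $Y'$ is a smooth component of the snc divisor $\Delta_{X'}$. For (1), since $Y'$ is a smooth prime divisor on the smooth $X'$, ordinary adjunction gives $(K_{X'}+Y')|_{Y'}=K_{Y'}$, with $K_{Y'}$ pinned down by the Poincar\'e residue rather than merely up to linear equivalence. Because $Y$ has coefficient $1$ in $Y+B$ its strict transform has discrepancy $a(Y',X,Y+B)=-1$, so $Y'$ occurs in $\Delta_{X'}$ with coefficient $1$; restricting $K_{X'}+\Delta_{X'}=f^*(K_X+Y+B)$ to $Y'$ then yields
\begin{equation*}
K_{Y'}+(\Delta_{X'}-Y')|_{Y'}=f^*(K_X+Y+B)|_{Y'}.
\end{equation*}
As $f|_{Y'}=\nu\circ\tilde{f}$, the right-hand side equals $\tilde{f}^*\bigl((K_X+Y+B)|_{Y^\nu}\bigr)$, which by the defining relation \eqref{ad} of the different is $\tilde{f}^*(K_{Y^\nu}+\Diff(B))$, giving (1).

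For (2) I would push the identity of (1) forward by $\tilde{f}_*$. On the left $\tilde{f}_*K_{Y'}=K_{Y^\nu}$, because $\tilde{f}$ is proper birational onto the normal $Y^\nu$, so $K_{Y'}$ differs from the birational transform of $K_{Y^\nu}$ only by $\tilde{f}$-exceptional divisors, which push forward to zero. On the right $\tilde{f}_*\tilde{f}^*(K_{Y^\nu}+\Diff(B))=K_{Y^\nu}+\Diff(B)$, since $K_{Y^\nu}+\Diff(B)$ is $\QQ$-Cartier and divisorial pushforward undoes pullback. Cancelling $K_{Y^\nu}$ leaves $\Diff(B)=\tilde{f}_*\bigl((\Delta_{X'}-Y')|_{Y'}\bigr)$.

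For (3) I would read (1) as an equality of divisors, exhibiting $(Y',(\Delta_{X'}-Y')|_{Y'})$ as a crepant model of $(Y^\nu,\Diff(B))$, so that the discrepancy along a prime divisor $E\subset Y'$ is $a(E,Y^\nu,\Diff(B))=-\operatorname{coeff}_E\bigl((\Delta_{X'}-Y')|_{Y'}\bigr)$. The coefficients of $\Delta_{X'}-Y'$ are the numbers $-a(E,X,Y+B)$: plt forces every exceptional discrepancy above $-1$, and plt near $Y$ also forbids a coefficient-one component of $B$ from meeting $Y$ (such a crossing would produce an exceptional divisor of discrepancy $-1$), so all these coefficients stay below $1$ near $Y'$. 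Hence $(\Delta_{X'}-Y')|_{Y'}$ is snc on the smooth $Y'$ with coefficients $<1$, which makes this crepant model, and therefore $(Y^\nu,\Diff(B))$ itself, klt.

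The step I expect to be the main obstacle is the upgrade from $\QQ$-linear equivalence to an honest equality of Weil divisors, needed both for (2) and for the coefficient form of (1) used in (3). This hinges on choosing the canonical divisors $K_X,K_{X'},K_{Y^\nu},K_{Y'}$ compatibly through the Poincar\'e residue, so that $\Diff(B)$ is realized as a definite divisor rather than a class. I would settle it by a codimension-one computation at the generic point of each prime divisor $P\subset Y^\nu$, where $Y^\nu$ is smooth and both $\Diff(B)$ and the pushed-forward restriction are governed by the same local residue data, following the definition of the different in \cite[\S 4.1]{Ko13}.
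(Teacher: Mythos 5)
Your argument is correct and follows essentially the same route the paper takes (which it leaves largely to citations): (1) from adjunction on the log resolution combined with \eqref{ad} and the commutative diagram $Y'\to Y^{\nu}\to Y\subset X$, (2) by pushing (1) forward along $\tilde{f}$ as in Hacon's treatment, and (3) by reading (1) as a crepant relation and bounding the coefficients of $(\Delta_{X'}-Y')|_{Y'}$ using plt. Your closing remark — that the upgrade from $\QQ$-linear equivalence to an equality of Weil divisors must be settled at the generic points of prime divisors of $Y^{\nu}$ via the codimension-one definition of the different in \cite[\S 4.1]{Ko13} — correctly identifies and resolves the only delicate point.
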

 
 \begin{proof} 
 
 (1) follows from the above relation \eqref{ad} and the commutative diagram consisting of $Y' \subset X' \to X$ and $Y' \to Y^\nu \to Y \subset X$.  (2) follows from (1) (cf. \cite{H14}).    (3) is well known from \cite{Ko92}, \cite{Ko97}. 
 \end{proof} 
 \noi Here  the divisorial pushforward of a divisor $\sum_{i \in I} a_i D_i$  along a bimeromorphic morphism $g$ means that we take $\sum_{i \in I_0} a_i g(D_i)$ with $I_0 \subset I$ being the subset of the index $i$ for which the image $g(D_i)$ remains to be of codimension $1$ (cf. \cite[(2.40)]{Ko13}).

 If both $K_X +Y$ and $B$ are $\QQ$-Cartier, $\Diff(0)$ is also defined and we have $\Diff(B) = \Diff(0) + B|_{Y^{\nu}}$  \cite[(3.2.1)]{S92}. Note that $\Diff(0)$ can be indeed a nonzero divisor as in the following example. 

\begin{example}\cite[Ex. 4.3]{Ko13}
 Let $X$ be the normal surface defined by $(xy - z^m = 0)$ in $\CC^3$ (where $m \ge 1$). Let $Y$ be the Weil divisor defined by $(y=z=0)$. In this case, $\Diff(0)$ is equal to the $\QQ$-divisor  $(1 - \frac{1}{m}) [p]$ on $Y = Y^{\nu}$ where $[p]$ is the prime divisor on $Y$ given by the  point $p = (0,0,0) \in \CC^3$.   

\end{example}

 From this point on, we let $X$ be smooth. 

\begin{prop}\label{diff0}

 Let $X$ be a complex manifold and $Y$ a prime divisor on $X$. If   $Y$ is normal, then we have $\Diff (0) = 0$ as equality of Weil divisors on $Y$. 

\end{prop}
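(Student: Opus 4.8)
The plan is to reduce the vanishing of $\Diff(0)$, a Weil $\QQ$-divisor on $\yn = Y$ (the two agreeing since $Y$ is normal), to a computation in codimension one. A Weil divisor on $Y$ vanishes if and only if its coefficient along every prime divisor $D \subset Y$ is zero, so I would fix such a $D$, whose generic point $\eta_D$ has codimension two in $X$, and show that the coefficient of $\Diff(0)$ along $D$ is $0$. The guiding principle is that this coefficient is a local invariant at $\eta_D$, reflecting the surface reduction built into the definition of the different, cf. \cite[\S 4.1]{Ko13}.

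The decisive input is normality of $Y$. By Serre's criterion a normal variety satisfies $R_1$, i.e. it is regular in codimension one, so $Y$ is smooth at the generic point $\eta_D$ of every prime divisor $D \subset Y$. Because $X$ is smooth by hypothesis, near $\eta_D$ the divisor $Y$ is a smooth hypersurface of the smooth variety $X$, and hence $(X, Y)$ is simple normal crossing there. At such a point adjunction holds in the classical form $(K_X + Y)|_Y \sim K_Y$ with no correction term, so the different --- which by construction records precisely the codimension-one deviation from this formula, cf. \eqref{ad} --- should have coefficient zero along $D$. As $D$ is arbitrary this would give $\Diff(0) = 0$.

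The point requiring real care is the passage from this heuristic to an honest equality of divisors, namely that the coefficient of $\Diff(0)$ along $D$ truly depends only on $\eta_D$ and vanishes at a simple normal crossing point. I would make this rigorous through Proposition~\ref{prop1}(2). Choosing a log resolution $f : X' \to X$ of $(X, Y)$ that is an isomorphism over the simple normal crossing locus of $(X, Y)$, the map $f$ is an isomorphism over a neighborhood of $\eta_D$; there $Y' = Y$ and no $f$-exceptional divisor is present. Hence $(\Delta_{X'} - Y')|_{Y'}$ has no component dominating $D$, so its divisorial pushforward $\tilde f_*\bigl((\Delta_{X'} - Y')|_{Y'}\bigr)$, which equals $\Diff(0)$ by Proposition~\ref{prop1}(2), has coefficient $0$ along $D$, as wanted.

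A resolution-free alternative that I would keep in reserve runs as follows: since $X$ is smooth, $Y$ is a Cartier divisor in a Cohen--Macaulay variety, hence itself Cohen--Macaulay, and adjunction for dualizing sheaves yields a global isomorphism $\omega_Y \cong \omega_X(Y)|_Y$. For $Y$ normal this identifies $\OO_Y(K_Y)$ with $\OO_Y\bigl((K_X + Y)|_Y\bigr)$ compatibly with the residue map that defines the different, so that $\OO_Y(\Diff(0))$ is trivial with vanishing effective representative; that is, $\Diff(0) = 0$.
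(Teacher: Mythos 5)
Your argument is correct and follows essentially the same route as the paper: both proofs reduce the vanishing of $\Diff(0)$ to its coefficient along an arbitrary prime divisor $V \subset Y$ and use that normality of $Y$ (regularity in codimension one) together with smoothness of $X$ forces the pair to be regular, indeed snc, at the generic point of $V$. The only difference is that the paper cites \cite[Prop. 4.5 (1)]{Ko13} for the codimension-one vanishing, whereas you verify that step directly from Proposition~\ref{prop1}(2) by choosing a log resolution that is an isomorphism over the snc locus, which is a perfectly valid unpacking of the same fact.
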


\begin{proof} 

 This follows immediately from \cite[Prop. 4.5 (1)]{Ko13} (see also \cite[Prop. 2.35 (3)]{Ko13} when $\dim X =2$) as follows.  Let $V \subset Y$ be a prime divisor. The coefficient of $\Diff (0)$ along $V$ is equal to zero if and only if $X$ and $Y$ are both regular at the generic point of $V$, which is the case since the dimension of the singular locus of $Y$ is strictly less than the dimension of $V$. 
\end{proof} 

\begin{remark1}

 By this proposition, when $Y$ is normal, we have $\Diff (B) = \Diff(0) + B|_Y = B|_Y$. Thus the statement of Theorem~\ref{inv} includes the inversion of adjunction as stated in \cite[Thm. 5.50 (1)]{KM98} and \cite[Thm. 7.5 (1)]{Ko97} (when $X$ is smooth). 
 
\end{remark1}

\begin{example}
 Let $X = \CC^2$ and $Y$ given by the equation $y^2 = x^2 (x+1)$. One blowup of the origin gives a log resolution of the pair $(X, Y)$ used in defining the different. In the above notation, $\tilde{f}: Y' \to Y^\nu$ is isomorphism and it is easy to check that $\Diff (0)$ is the sum of two points on $Y^\nu$ lying above the origin. 

\end{example}

 The following example shows that the inversion of adjunction as in Theorem~\ref{inv} does not hold when $B$ is not effective. 
 
\begin{example}[J. Kollár]

Let $X=\CC^2$, $Y=(x=0)$ and $B=2 (x-y=0)- 2 (x+y=0)$. Then $B$ restricts to the
zero divisor on $Y$, but the pair $(X, Y+B)$ is not even lc.

\end{example}

 \begin{proposition}\label{notklt}
 
  Let $X$ be a complex manifold and $Y$ a prime divisor on $X$. If there exists a point $p \in Y$ with $\mult_p Y \ge n$, then the pair $(Y^\nu, \Diff 0)$ is not klt.

 \end{proposition}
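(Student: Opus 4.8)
The plan is to reduce the claim, via the inversion of adjunction Theorem~\ref{inv} applied with the effective boundary $B = 0$, to the statement that the pair $(X, Y)$ fails to be plt near $Y$. Since $X$ is smooth, $K_X$ is Cartier and the prime divisor $Y$ is Cartier, so $K_X + Y$ is $\QQ$-Cartier and $\Diff 0$ is well defined; Theorem~\ref{inv} then asserts that $(Y^\nu, \Diff 0)$ is klt if and only if $(X, Y)$ is plt in a neighborhood of $Y$. Thus it is enough to produce a prime divisor over $X$, lying over the point $p$, whose discrepancy with respect to $(X, Y)$ is at most $-1$: this contradicts plt and hence, by the contrapositive of the ``only if'' direction, rules out klt for $(Y^\nu, \Diff 0)$.

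Such a divisor is furnished by a single blowup. Let $\pi : X' \to X$ be the blowup of the smooth point $p$, with exceptional divisor $E \cong \PP^{n-1}$. Because $X$ is smooth of dimension $n$ we have $K_{X'} = \pi^* K_X + (n-1) E$, and because $\mult_p Y = m$ the total transform satisfies $\pi^* Y = Y' + m E$, where $Y'$ is the strict transform of $Y$. Combining these,
\[
 K_{X'} + Y' = \pi^*(K_X + Y) + (n - 1 - m)\, E,
\]
so the discrepancy of $E$ is $a(E, X, Y) = n - 1 - \mult_p Y$.

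Under the hypothesis $\mult_p Y \ge n$ this gives $a(E, X, Y) \le -1$. As $E$ is exceptional and maps to $p \in Y$, no neighborhood of $Y$ can be plt, so $(X, Y)$ is not plt near $Y$, and Theorem~\ref{inv} then forces $(Y^\nu, \Diff 0)$ to be non-klt. I expect the only delicate points to be bookkeeping rather than substance: checking that $B = 0$ satisfies the hypotheses of Theorem~\ref{inv} (both immediate from smoothness of $X$), and confirming that the offending divisor $E$ is genuinely exceptional and supported over $Y$, so that it is precisely the localized condition ``plt in a neighborhood of $Y$'' that is violated. If one preferred a proof independent of Theorem~\ref{inv}, one could instead pass to a log resolution and read off the coefficient of $\Diff 0$ along the image on $Y^\nu$ of a suitable divisor over $p$ using Proposition~\ref{prop1}; this is self-contained but computationally heavier, whereas the route above isolates the one essential input, namely the discrepancy drop $n - 1 - \mult_p Y$.
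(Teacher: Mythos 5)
Your proof is correct and takes essentially the same route as the paper: both reduce the claim, via the inversion of adjunction Theorem~\ref{inv} with $B=0$, to the statement that $(X,Y)$ is not plt near $p$. The only difference is that the paper quotes \cite[(3.9.4)]{Ko97} for the implication ``$(X,Y)$ plt $\Rightarrow \mult_p Y < n$,'' whereas you derive it directly from the discrepancy $a(E,X,Y)=n-1-\mult_p Y$ of the exceptional divisor of the blowup of $p$ --- which is precisely the computation underlying the cited fact.
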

 
 \begin{proof} 
 
 From \cite[(3.9.4)]{Ko97}, if the pair $(X, Y)$ is plt, then $\mult_p Y < n$ for every point $p \in X$. Hence the given condition implies that $(X, Y)$ is not plt near the point $p$. By the inversion of adjunction Theorem~\ref{inv} when $B=0$, the pair $(Y^\nu, \Diff 0)$ is not klt.  
 \end{proof}

   \medskip
   
\section{Ohsawa measure}

 In this section, we will revisit the Ohsawa measure from \cite{O5}, \cite{D15} with a detailed treatment of this important notion.

We first have the following terminology.   Let $X$ be a complex manifold and let $D$ be an effective $\QQ$-divisor on $X$ such that $mD$ is Cartier for $m \ge 1$. 
   A quasi-psh function is said to be {\textbf{with poles along $D$}} if it is locally of the form $\frac{1}{m} \log \abs{u}^2 + \alpha$ where $u$ is a local equation of $mD$ and $\alpha$ smooth. Examples are given by  $\frac{1}{m} \log \abs{s}^2_h$ where 
  $s \in H^0 (X, \OO(mD))$ is a holomorphic defining section of $mD$ and $h$ is a smooth hermitian metric of the line bundle $\OO(mD)$. 
  
  Let $D$ be a not necessarily effective $\QQ$-divisor on $X$ such that $D = D_1 - D_2$ with $D_1, D_2 \ge 0$. A measure on $X$ is said to be {\textbf{with poles along}} $D$ if its local density functions with respect to the Lebesgue measure are of the form $e^{-\beta_1 + \beta_2}$ where $\beta_k$ is quasi-psh with poles along $D_k$.  Also a measure is said to be \textbf{with zeros along} $D$ if it has poles along $-D$.

\subsection{Ohsawa measure}
   
 Now let $Y \subset X$ be an irreducible reduced hypersurface.  Let again $D$ be an effective $\QQ$-divisor on $X$ whose support does not contain $Y$.  Let $\psi : X \to [-\infty, \infty)$ be a quasi-psh function with poles along the divisor $Y + D$.  
Let $dV_X$ be a continuous volume form on $X$ with nonnegative density with respect to the Lebesgue measure.  We only need the case when $dV_X$ has either (1) strictly positive density or (2) zeros along a divisor, thus we assume that $dV_X$ has zeros along a divisor $Z \ge 0$ (so that $Z=0$ corresponds to the case (1)). 
 
Let $T$ be the union of the singular locus of $Y$ and the support of $D$. Let $\yr$ be the regular locus of $Y$.  Following \cite{O5}, \cite{D15}, we have 

\begin{definition}\cite{D15} \label{iber}
 A positive measure $d\mu$ on $\yr$ is called the  \emph{ \textbf{Ohsawa measure}} of $\psi$ if it satisfies the following conditions:
 
 \begin{enumerate}[(a)]
 \item
   For every $g$, a real-valued compactly supported continuous function on $\yr \setminus T$ and every $\tilde{g}$, a compactly supported continuous extension of $g$ to $X \setminus T$, the relation 
 
 \begin{equation}\label{dvp}
\int_{\yr \setminus T} g \; d\mu = \lim_{t \to -\infty} \int_{\{x \in X \setminus T, t < \psi(x) < t+1 \}} \tilde{g} e^{-\psi} dV_{X} 
\end{equation}

\noi holds. 
\item The measure $d\mu$ on $\yr$ is the trivial extension of its restriction to $\yr \setminus T$. 

\end{enumerate}
\end{definition}

\begin{remark1}
 In \eqref{dvp}, $\lim$ is taken in the place of $\limsup$ as in \cite[(2.4)]{D15} (cf. \cite[p.4]{O5}). The fact that taking $\lim$ is enough in the setting of `neat' analytic singularities is due to \cite{D15} whose approach is followed in this section. In particular, the existence of the limit is given in Lemma~\ref{compu}.

\end{remark1}

If it exists, the Ohsawa measure is unique from basic properties of measures. The Ohsawa measure of $\psi$ will be denoted by $dV[\psi]$ where its dependence on the choice of $dV_X$ is suppressed. 
  The existence is due to \cite{D15} in this generality as a special case of \cite[Prop. 4.5]{D15} when $p=1$ and when $Y$ is of codimension $1$. For the sake of completeness and convenience for readers, we explain the arguments  adapted to our setting (cf. Remark~\ref{differ}) along with some addition and rearrangement of details. \footnote{In particular, independence of the choice of $\wt{g}$ is built into the definition of the Ohsawa measure and its existence is treated in an increasing order of generalities. }

\begin{prop}\label{iber2}
Let $X$, $dV_X$, $Y+D$ and $\psi$ be as in Definition~\ref{iber}: in particular, $\psi$ is quasi-psh on $X$ with poles along $Y+D$ and $dV_X$ has zeros along a divisor $Z \ge 0$.   The Ohsawa measure $dV[\psi]$ exists in the following order of generalities: 

\begin{enumerate}

\item When  $X$ is an open subset of $\CC^n$ with coordinates $(z_1, \ldots, z_n)$,  $Y$ is the hyperplane $(z_n = 0)$ and $Y+\red D + \red Z$ is an snc (i.e. simple normal crossing) divisor supported on the coordinate hyperplanes. (Here $\red D$ is the sum of the components of $D$ with coefficient $1$.)

\item  When $X$ is general,  $Y$ is an irreducible smooth hypersurface and $Y+ \red D + \red Z$ is an snc divisor.

\item When $X$ is general and $Y$ is an irreducible possibly singular hypersurface.

\end{enumerate}

  Also the measure $dV[\psi]$ has the property of  putting no mass on closed analytic subsets of $Y$. \footnote{See \cite[(1.3)]{BBEGZ} for more information on this property.} 
Moreover, in each of the above cases (1),(2),(3), we have the following properties (1'),(2'),(3') of $dV[\psi]$, respectively. 

\;

(1') If  $\psi = \log \abs{z_n}^2 + \alpha$ for a quasi-psh function $\alpha$ with poles along $D$ and $dV_X = e^\beta {d\ld_n}$ for a quasi-psh function $\beta$ with poles along $Z$, then we have 

$$ dV[\psi] = e^{-(\alpha - \beta)} {d\ld_{n-1}} $$

\noi where $d\ld_n$ is the Lebesgue measure on $\CC^n$. 

\;

(2')
$dV[\psi]$ is a measure on $Y$ with poles along the divisor $(D - Z)|_Y$. 

\;

(3') There exists a proper bimeromorhic morphism $Y' \to Y$ such that $dV[\psi]$ is the direct image  of a measure on $Y'$ with poles along a divisor as in \eqref{dnu}.

\end{prop}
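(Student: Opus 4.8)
The plan is to prove existence in the stated increasing generality, reading off (1'), (2'), (3') along the way, with the explicit computation of case (1) serving as the engine for the general cases. For case (1) I would evaluate the limit in \eqref{dvp} by Fubini. Writing $\psi = \log\abs{z_n}^2 + \al$ and $dV_X = e^{\beta}\, d\ld_n$, where $\al$ is smooth on $\Supp \tilde g$ (which avoids $T \supseteq \Supp D$) and $e^{\beta}$ is continuous, the integrand becomes $\tilde g\, \abs{z_n}^{-2} e^{-(\al-\beta)}\, d\ld_{n-1}\, d\ld_1(z_n)$. Integrating first in $z_n$ over the shell $\{t < \log\abs{z_n}^2 + \al < t+1\}$ in polar coordinates, the factor $\abs{z_n}^{-2} d\ld_1$ assigns to each such shell a mass that is the same for all $t$, while $\tilde g$ and $e^{-(\al-\beta)}$ converge uniformly on compacts to their restrictions to $(z_n=0)$ as $t\to-\infty$. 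Dominated convergence in $z'=(z_1,\dots,z_{n-1})$ then produces $\int_{\yr\setminus T} g\, dV[\psi] = \int g\, e^{-(\al-\beta)}\, d\ld_{n-1}$ (up to a universal normalizing constant), which is manifestly independent of the extension $\tilde g$; this is existence together with (1'). Condition (b) and the absence of mass on analytic subsets are then immediate, since the measure has a density against $d\ld_{n-1}$.

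For case (2) the construction is local and, by the uniqueness remark following Definition~\ref{iber}, the local pieces glue. I would cover $Y$ by charts in which $Y=(z_n=0)$ and the snc divisor $Y + \red D + \red Z$ lies on the coordinate hyperplanes; each chart is case (1), and the densities $e^{-(\al-\beta)}$ patch to a global measure on $Y$. Property (2') is then immediate because $\al-\beta$ is locally a difference of quasi-psh functions with poles along $D$ and $Z$ respectively, so $e^{-(\al-\beta)} d\ld_{n-1}$ is a measure on $Y$ with poles along $(D-Z)|_Y$.

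For case (3) I would take a log resolution $f:X'\to X$ of $(X,Y+D)$ making $\red Z$ snc as well, with smooth strict transform $Y'$ and induced $\tilde f:Y'\to Y$. Then $\psi' := f^*\psi$ is quasi-psh with poles along $Y' + E$, where $E\ge 0$ collects the exceptional and strict-transform contributions and $Y'$ occurs with coefficient exactly $1$ because $Y$ is reduced and $D\not\supseteq Y$; likewise $f^* dV_X$ is a continuous volume form with zeros along some $Z'\ge 0$ supported on $\Supp f^*Z \cup \exc(f)$, and $Y' + \red E + \red Z'$ is snc. Case (2) upstairs thus yields $dV[\psi']$ on $Y'$, a measure with poles along $\Gamma := (E-Z')|_{Y'}$, the divisor of \eqref{dnu}; this is (3'). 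To obtain $dV[\psi] = \tilde f_* dV[\psi']$ I would change variables $x=f(x')$ in \eqref{dvp}: since $f$ is a biholomorphism off a measure-zero analytic set and $\{t<\psi<t+1\}$ pulls back exactly to $\{t<\psi'<t+1\}$, each finite-$t$ integral downstairs equals its upstairs counterpart with $\tilde g\circ f$, and passing to the limit (legitimate by case (2)) gives $\int_Y g\, dV[\psi] = \int_{Y'} (g\circ\tilde f)\, dV[\psi'] = \int_Y g\, \tilde f_* dV[\psi']$. Since $dV[\psi']$ is absolutely continuous against Lebesgue measure on $Y'$, it charges neither $\exc(f)$ nor $\tilde f^{-1}(\ys)$, so $\tilde f_* dV[\psi']$ puts no mass on analytic subsets and, restricted to $\yr$, is the trivial extension demanded by (b).

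The step I expect to be the main obstacle is precisely this passage to the log resolution in case (3): one must rule out that, in the limit $t\to-\infty$, mass of the shell integrals escapes onto $\exc(f)$ or concentrates over $\ys$, so that the change of variables genuinely commutes with the limit. This is exactly what the absolute continuity supplied by case (2), together with the snc normalization forced by the choice of $f$, is designed to guarantee.
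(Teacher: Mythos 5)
Your proposal is correct and follows essentially the same route as the paper: the explicit Fubini/polar-coordinate computation in the snc model case (the paper's Lemma~\ref{compu}, including the separate verification that the limit is independent of the extension $\tilde g$), globalization by gluing the local Poincar\'e-residue densities (the paper's stated alternative to its Riesz-representation argument), and for the singular case a log resolution with the change of variables applied to the finite-$t$ shell integrals before passing to the limit, yielding $dV[\psi]$ as the pushforward of the upstairs Ohsawa measure with poles along $(D'-Z)|_{Y'}$. The only cosmetic difference is that the paper reduces the model computation to $\psi$ of the exact monomial form \eqref{alpha} rather than handling a general $\alpha$ by uniform continuity as you do, but both are the same argument in substance.
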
    

\begin{proof}

(1) When we restrict to $Y \setminus D \subset X \setminus D$, this is checked by computation which goes back to \cite[p.4]{O5}  (cf.  \cite{D15}).  For the sake of completeness, we provide details in this generality in  Lemma~\ref{compu}. Note that in (1'), we may assume that $\beta = 0$ and $\psi$ is of the form in \eqref{alpha}.    The measure given by (1') certainly has the property of putting no mass on closed analytic subsets.

 \;

(2)  Denote the LHS of \eqref{dvp} by $I(g)$. Since $e^{-\psi}$ can be locally non-integrable along (some subset of) the support of $D$, replace $\yr \setminus T$ by $Y \setminus D$ in \eqref{dvp}. Then $I(g)$ is a positive linear functional on the space of compactly supported continuous functons on $Y \setminus D$. (The required linearity of $I(g)$ reduces to (1) by using a partition of unity.)   Applying Riesz representation theorem, we get the corresponding measure on $Y \setminus D$ which we extend to a measure $d\mu$ on $Y$ trivially. Showing that this $d\mu$ satisfies \eqref{dvp} for a compactly supported $g$ on $Y \setminus D$ again reduces to the case (1) by using a partition of unity. The existence of the measure is shown. (Alternatively,  one can also show the existence by noting that $e^{-(\alpha - \beta)} {d\ld_{n-1}}$ in (1') is globally defined by Poincar\'e residue.)  The property (2') follows from (1').

\;

(3) The existence will be shown together with the property (3').  
Let $f: X' \to X$ be a log resolution of the pair $(X, Y+D)$.  Let $Y'$ be the strict transform of $Y$ on $X'$.    Let $Z$ be the effective divisor $K_{X'} - f^* K_X$ given by the contribution of the jacobian of $f$. Let $\yr' := f^{-1} (\yr)$. Consider the following defining condition of  the Ohsawa measure (say $d\nu$) of the quasi-psh function $f^* \psi$ with respect to the continuous volume form $f^* dV_X$ on $X'$.

  \begin{equation}\label{dvp1}
\int_{\yr'}  f^* g \; d\nu = \lim_{t \to -\infty} \int_{\{x \in X', t < f^* \psi(x) < t+1 \}} (f^* \tilde{g}) \; e^{-f^* \psi} f^* dV_{X}. 
\end{equation}

 Note that $f^* \psi$ has poles along $f^* (Y+D) = Y' + D'$ where $D'$ is defined by this relation and that $f^* dV_X$ has zeros along $Z$.      
 Hence we see that $d\nu$ is a measure on $Y'$ with  poles along the divisor

\begin{equation}\label{dnu}
 (D' - Z) |_{Y'} = (f^* Y + f^*D  - Y' - Z)|_{Y'} 
 \end{equation}
 
\noi  from the property (2'). 

Now, since $f: X' \to X$ is modification, we have the equality of the right hand sides of \eqref{dvp} and \eqref{dvp1} (before taking the lim) from change of variables.  Hence the direct image of $d\nu$ under $\yr' \to \yr$ satisfies the condition \eqref{dvp} and therefore gives the Ohsawa measure $dV[\psi]$. 
This concludes the proof. 
\end{proof}

\begin{remark1}

 We defined the Ohsawa measure $dV[\psi]$ in the generality of $\psi$ having poles along $Y+D$ even though in Theorem~\ref{extn0} and Corollary~\ref{dvpsi}, $\psi$ has poles along $Y$ only. The generality of $Y+D$  is more convenient in the above proof and also useful  for other applications. 

\end{remark1}

Now in order to state Lemma~\ref{compu}, let $Y+D$ be an snc divisor and let $U$ be the unit polydisk centered at the origin in $\CC^n$. Let $D = \sum^N_{i=1} a_i D_i$ where $N < n$ and $D_i$ is the prime divisor given by $(z_i = 0)$. 
 Let $\psi$ be a psh function on $U$ with poles along $Y + D$ given by
\begin{equation}\label{alpha}
\psi (z) = \log{ \left( \ve z_n \ve^2 \prod_{k=1}^{N}{\ve z_k \ve^{2 a_k}} \right) }.
\end{equation}

\begin{lemma}\label{compu}
For a continuous function $g$  with compact support on $Y \backslash D$, we have
\begin{equation} \label{WeightedOhsawa}
\lim_{t \to -\infty}{ \int_{ \{ x \in U : t < \psi(x) < t+1 \} }{ \widetilde{g} e^{-\psi} d\lambda_n  } }
= \pi \int_{Y \backslash D}{ \frac{ g }{ \prod_{k=1}^N \ve z_k \ve^{2a_k} } d \lambda_{n-1} }
\end{equation}
\end{lemma}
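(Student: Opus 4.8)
The plan is to separate, via Fubini, the coordinate $z_n$ that cuts out $Y$ from the remaining coordinates $z' = (z_1, \dots, z_{n-1})$, and then to evaluate the $z_n$-integral over the collapsing slab exactly. First I would record that
\[
\psi(z) = \log \ve z_n \ve^2 + \phi(z'), \qquad \phi(z') := \sum_{k=1}^N a_k \log \ve z_k \ve^2,
\]
so that $\phi$ depends only on $z_1, \dots, z_N$ and $e^{-\psi} = e^{-\phi(z')}/\ve z_n \ve^2$. Since $\widetilde g$ is a compactly supported continuous extension of $g$ from $Y \setminus D$ to $U \setminus D$, its support is a compact set disjoint from $\Supp D$; on the projection of this support to the $z'$-space, $\phi$ is smooth and $e^{-\phi} = \prod_{k=1}^N \ve z_k \ve^{-2a_k}$ is bounded. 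In particular the integrand vanishes in a neighborhood of $\Supp D$, so no convergence issue arises there, and as $t \to -\infty$ the slab concentrates near $Y = (z_n = 0)$, well inside $U$.

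For fixed $z'$ on the support of $\widetilde g$, the slab condition $t < \psi < t+1$ becomes the annulus $e^{(t-\phi(z'))/2} < \ve z_n \ve < e^{(t+1-\phi(z'))/2}$ in the $z_n$-plane. Writing $z_n = re^{i\theta}$ with $d\lambda_2(z_n) = r\,dr\,d\theta$, the factor $e^{-\phi}/\ve z_n \ve^2$ becomes $e^{-\phi}/r^2$, and the $z_n$-integral over this annulus equals
\[
e^{-\phi(z')} \int_{e^{(t-\phi(z'))/2}}^{e^{(t+1-\phi(z'))/2}} \frac{1}{r}\left( \int_0^{2\pi} \widetilde g(z', re^{i\theta})\,d\theta \right) dr .
\]
The crucial point is that the purely radial integral is $\int \frac{dr}{r} = \tfrac12$ \emph{for every} $t$ and every $z'$, because the logarithm of the outer radius minus that of the inner radius equals $\tfrac12$ independently of $t$ and $\phi(z')$; all the $t$- and $\phi$-dependence cancels.

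Next I would pass to the limit $t \to -\infty$. As the annulus collapses onto $\ve z_n \ve = 0$, uniform continuity of the compactly supported $\widetilde g$ gives $\int_0^{2\pi} \widetilde g(z', re^{i\theta})\,d\theta \to 2\pi\,\widetilde g(z',0) = 2\pi\, g(z')$, uniformly in $z'$ and in $r$ throughout the shrinking annulus. Combined with the exact value $\tfrac12$ of the radial integral, the bracketed $z_n$-integral converges uniformly to $\pi\, g(z')$, while $e^{-\phi(z')}$ is a fixed bounded function on the compact projection of $\Supp \widetilde g$; dominated convergence in $z'$ then yields
\[
\lim_{t \to -\infty} \int_{\{t < \psi < t+1\}} \widetilde g\, e^{-\psi}\, d\lambda_n = \pi \int_{\CC^{n-1}} g(z')\, e^{-\phi(z')}\, d\lambda_{n-1} = \pi \int_{Y \setminus D} \frac{g}{\prod_{k=1}^N \ve z_k \ve^{2a_k}}\, d\lambda_{n-1},
\]
which is the claimed identity. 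I expect the only genuine obstacle to be this interchange of limit and integration: everything hinges on the exact cancellation that pins the radial integral to $\tfrac12$ for all $t$, together with the uniform convergence of the angular averages guaranteed by the compact support and uniform continuity of $\widetilde g$, while the region near $\Supp D$ contributes nothing precisely because $\widetilde g$ vanishes there.
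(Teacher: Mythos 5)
Your argument is correct, and it reaches the same computational heart as the paper's proof: after separating the $z_n$-variable, the slab $\{t<\psi<t+1\}$ becomes, for each fixed $z'$, an annulus whose radial integral $\int dr/r$ equals exactly $\tfrac12$ independently of $t$ and of $\phi(z')$, and the angular integration contributes the factor $2\pi$, yielding the constant $\pi$. The organizational difference is real, though. The paper proceeds in two steps: it first proves that the limit is independent of the choice of extension $\widetilde g$ by an $\epsilon$-comparison of two extensions (using uniform continuity and the boundedness of $e^{-\phi}$ away from $\Supp D$), and only then computes the limit exactly for the specific extension $\widetilde g(z)=g(z')(1-\ve z_n\ve)$, where the error term $I_2$ is explicitly of order $e^{t/2}$. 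You instead run a single direct limit argument valid for an arbitrary extension: uniform continuity gives $\int_0^{2\pi}\widetilde g(z',re^{i\theta})\,d\theta \to 2\pi g(z')$ uniformly over the collapsing annulus, and dominated convergence in $z'$ finishes. This buys you the independence of the extension as an automatic by-product rather than a separate step, at the cost of phrasing the radial integral slightly loosely (the angular average sits inside the $dr/r$ integral and depends on $r$, so one should say that it is uniformly within $\epsilon$ of $2\pi g(z')$ on the annulus before factoring out $\int dr/r=\tfrac12$ --- exactly as your second paragraph in effect does). Both routes rely on the same two ingredients, uniform continuity of the compactly supported $\widetilde g$ and the exact logarithmic length $\tfrac12$ of the annulus, so neither is more general; yours is marginally more economical, the paper's is marginally more explicit about the rate of convergence.
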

\begin{proof}
\;

\noi {\bf Step 1.} We will first show that the LHS of \eqref{WeightedOhsawa} does not depend on the choice of $\widetilde{g}$, compactly supported in $\CC^n \backslash D$. 
Denote two choices of $\widetilde{g}$ by $g_1$ and $g_2$. Then it suffices to show that

\begin{equation}\label{g1g2}
\lim_{t \to -\infty}{ \int_{ \{ x \in U : t < \psi(x) < t+1 \} }{ g_1 e^{-\psi} d\lambda_n  } }
\le \lim_{t \to -\infty}{ \int_{ \{ x \in U : t < \psi(x) < t+1 \} }{ g_2 e^{-\psi} d\lambda_n  } }.
\end{equation}

Since the supports of $g_1$ and $g_2$ are compact, they are uniformly continuous. Thus, for given $\epsilon>0$, one can find $\delta > 0$ such that $\ve g_i (x) - g_i(y) \ve < \epsilon$ whenever $\ve x - y \ve < 2\delta$, where $i =1,2$. Observe that for a $\delta$-neighborhood $N_\delta$ of $Y$ in $\CC^n$ and sufficiently negative $t < 0$, the set $\{x \in U : t < \psi(x) < t+1 \} \cap \left( \Supp g_1 \cup \Supp g_2 \right)$ is contained in $N_\delta$. For such $t$, we have (since $g_1 |_Y = g_2 |_Y$) 
\begin{equation}
\begin{aligned}
\int_{\{x \in U : t < \psi(x) < t+1\}}{ g_1 e^{-\psi} d\lambda_n }
&= \int_{\{x \in U : t < \psi(x) < t+1\}}{ g_1 \chi_{\CC^n \backslash A_{\epsilon_0}} e^{-\psi} d\lambda_n } \\
&\le \int_{ \{x \in U : t <\psi(x) < t+1 \}}{ (g_2 + 2\epsilon) \chi_{\CC^n \backslash A_{\epsilon_0}} e^{-\psi} d\lambda_n } \label{IndepOnExtension1}
\end{aligned}
\end{equation}
where $\epsilon_0$ is chosen so that the set $A_{\epsilon_0}$ defined by
$$
A_{\epsilon_0} : = \bigcup_{k=1}^N \{ (z_1,\ldots, z_n) \in \CC^n : \ve z_k \ve < \epsilon_0 \}
$$
does not intersect $\Supp g_1 \cup \Supp g_2$. Note that such $\epsilon_0$ can be found because $g_1$ and $g_2$ have compact supports in $\CC^n \backslash D$.
Hence the last line of (\ref{IndepOnExtension1}) is bounded above by

\begin{align*}
&\int_{\{x \in U : t < \psi(x) < t+1\}}{ g_2 e^{-\psi} d\lambda_n } + \frac{2\epsilon}{\epsilon_0^{2(a_1 + \cdots + a_N)}} \int_{ \{x\in U \backslash A_{\epsilon_0} : t<\psi(x)<t+1\}}{ \frac{1}{\ve z_n \ve^2} d\lambda_n } \\
&= \int_{\{x \in U : t < \psi(x) < t+1\}}{ g_2 e^{-\psi} d\lambda_n } + \frac{2C\epsilon}{\epsilon_0^{2(a_1 + \cdots + a_N)}}
\end{align*}
where $C$ is a constant independent of $t$. As $t$ tends to $-\infty$, we have
$$
\lim_{t \to -\infty}{ \int_{ \{ x \in U : t < \psi(x) < t+1 \} }{ g_1 e^{-\psi} d\lambda_n  } }
\le \lim_{t \to -\infty}{ \int_{ \{ x \in U : t < \psi(x) < t+1 \} }{ g_2 e^{-\psi} d\lambda_n  } }
+ \frac{2C\epsilon}{\epsilon_0^{2(a_1 + \cdots + a_N)}}.
$$
 Now letting $\epsilon$ converge to $0$, we have \eqref{g1g2}. \\ [10pt]

\noi {\bf Step 2.} In order to have \eqref{WeightedOhsawa}, thanks to Step 1, we may take $\widetilde{g}$ as
$$
\widetilde{g}(z_1, \ldots, z_n) = g(z_1,\ldots, z_{n-1}) (1 - \ve z_n \ve).
$$
Let $\epsilon$ be a positive number such that a polydisk $D_\epsilon^{n-1}(0) \subset \CC^{n-1}$ of radius $\epsilon$ does not intersect the support of $g$. Let $A := \prod_{k=1}^N \ve z_k \ve^{a_k}$. Then the integral on the LHS of (\ref{WeightedOhsawa}) is computed as follows:
\begin{align*}
\int_{ \{ x \in U : t < \psi(x) < t+1 \} }{ \widetilde{g} e^{-\psi} d\lambda_n  }
&= \int_{ \{ e^t < \ve z_n \ve^2 A^2 < e^{t+1} \} }{ \frac{g(z_1,\ldots, z_{n-1})}{A^2} \frac{1 - \ve z_n \ve}{ \ve z_n \ve^2 } d\lambda_n  } \\
&= 2\pi \int_{ \{ e^{t/2}A^{-1} < r_n < e^{(t+1)/2}A^{-1} \} }{ \frac{g(z_1,\ldots, z_{n-1})}{A^2} \frac{1-r_n}{r_n} dr_n d\lambda_{n-1} } \\
&= 2\pi \int_{ D^{n-1}_1 (0) \backslash D }{ \frac{g(z_1,\ldots, z_{n-1})}{A^2} \left[ \frac{1}{2} - e^{t/2}A^{-1} (\sqrt{e} - 1) \right] d\lambda_{n-1} }.
\end{align*}
Denote the last line by $I_1 - I_2$ where
\begin{align*}
I_1 &:= \pi \int_{D_1^{n-1}(0) \backslash D}{ \frac{g(z_1, \ldots, z_{n-1})}{\prod_{k=1}^N \ve z_k \ve^{2a_k} } d\lambda_{n-1} }, \\
I_2 &:= 2\pi (\sqrt{e}-1)e^{t/2} \int_{D_1^{n-1}(0) \backslash D}{\frac{g(z_1, \ldots, z_{n-1})}{A^3} d\lambda_{n-1}}.
\end{align*}
Since the support of $g$ does not meet $D_\epsilon^{n-1}(0)$, the integrand in $I_2$ is bounded. As $t$ tends to $-\infty$, $I_2$ converges to $0$, hence we obtain  (\ref{WeightedOhsawa}).
\end{proof}

\begin{remark1}

 In \cite{O5} (and later also used in \cite{GZ15}), the  Ohsawa measure was originally defined as the minimum element of a  certain partially ordered set of measures.

\end{remark1}

\begin{remark1}\label{differ}

Definition~\ref{iber} differs from its original counterpart in \cite{D15} where, supposing that the pair $(X, Y+ D)$ is lc, the Ohsawa measure $dV[\psi]$ is supported on the regular locus of $Y \cup D$ (where $D$ of course refers to its support), not just of $Y$. Definition~\ref{iber} is suitable for the purpose of this paper.
 \end{remark1}

\begin{remark1}

 In Definition~\ref{iber}, in fact it would be fine even if we replace $T$ by  another Zariski closed subset $T'$ containing $T$.  This is because when the property \eqref{dvp} is used in the proof of $L^2$ extension theorems (such as Theorem~\ref{extn}), we have the freedom to take off such a Zariski closed subset and perform the $L^2$ estimates in the complement when $X$ is Stein or quasi-projective, cf. \cite{S98}.

\end{remark1} 

\;

\medskip

\subsection{$L^2$ extension}  
  
   We will use the following  $L^2$ extension theorem stated with the Ohsawa measure.  It is a special case of \cite[Thm. 2.8]{D15} (and its singular metric version in Remark 2.9(b))  for $Y$ of codimension $1$ and $X$ Stein. (See \cite[Thm. 2.8]{D15}  for the original full statement.)

\begin{theorem}[J.-P. Demailly]\label{extn}

 Let $X$ be a Stein manifold.  Let $Y \subset X$ be an irreducible and reduced hypersurface.  Let $\psi : X \to [-\infty, \infty)$ be a quasi-psh function on $X$ with poles along $Y$.  Let $\omega$ be a K\"ahler metric on $X$.  
Let $F := K_X + L + M$ \footnote{using the additive notation for tensor products of line bundles} where $K_X$ is the canonical line bundle of $X$ and  $L$ and $M$ are line bundles on $X$. Let $h$ be a smooth hermitian metric of $L$. Assume that there is $\delta > 0 $ such that 

\begin{equation}\label{curv}
 i \Theta_{L, h} + \alpha \iddb \psi \ge 0
\end{equation}

\noi for every $\al \in [1, 1+\delta]$. Let $e^{-\vp}$ be a singular hermitian metric for $M$ with semipositive curvature current. For every holomorphic section $u \in H^0 (\yr, (K_X + L + M)|_{\yr})$ with

$$ \int_{\yr}  \abs{u}_{\omega,h,e^{-\vp}}^2 dV [\psi] < \infty,$$

\noi there exists a holomorphic section $\ut \in H^0 (X, K_X + L + M)$ whose restriction to $\yr$ is equal to $u$ such that 

\begin{equation}\label{eq1}
 \int_{X} \abs{\ut}^2_{\omega,h,e^{-\vp}}  \gamma (\delta \psi) e^{-\psi} dV_{X, \omega}  \le  \frac{34}{\delta} \int_{\yr}  \abs{u}_{\omega,h,e^{-\vp}}^2 dV [\psi] 
\end{equation}

\noi where $\gamma$ is defined by 

\begin{equation}\label{gam}
 \gamma(x) = e^{-\frac{1}{2} x} \text{ if } x \ge 0, \qa \gamma(x) = \frac{1}{1+x^2} \text{ if } x \le 0
\end{equation}

\noi as in \cite[Thm. 2.8]{D15}. 

 \end{theorem}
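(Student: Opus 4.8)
The plan is to prove this as an instance of the Ohsawa--Takegoshi--Manivel extension scheme, the one genuinely new feature being the measure $dV[\psi]$ on the right of \eqref{eq1}, which I will match to the analytic estimate only at the very last step. Fix a defining section $s$ of $Y$ so that $\psi = \log\abs{s}^2$ up to a smooth term, and regularize by $\psi_\ep := \log(\abs{s}^2 + \ep^2)$, a family of smooth quasi-psh functions decreasing to $\psi$ and still obeying a curvature inequality of type \eqref{curv} for $\al$ slightly inside $[1,1+\delta]$. Since $X$ is Stein, I would first build a merely \emph{smooth} section $\ut_0$ of $F$ with $\ut_0|_{\yr}=u$, by extending $u$ off $\yr$ and multiplying by a cutoff $\theta$ supported in a tubular neighborhood of $Y$ and equal to $1$ near $Y$. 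Its failure to be holomorphic is $g:=\db\ut_0$, whose support lies in the transition region of $\theta$, hence away from $Y$. The goal is to solve $\db v = g$ with an estimate against the weight $\gamma(\delta\psi)\eps$; since this weight blows up like $\abs{s}^{-2}$ along $Y$, finiteness of $\int\abs{v}^2\gamma(\delta\psi)\eps$ forces $v|_Y=0$, so that $\ut:=\ut_0-v$ is holomorphic, restricts to $u$ on $\yr$, and inherits the desired bound.

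The analytic heart is the twisted Bochner--Kodaira--Nakano a priori inequality applied with a twisting factor $\eta=\eta(\psi_\ep)$ and an auxiliary weight $\lambda=\lambda(\psi_\ep)$ read off from the profile $\gamma$ of \eqref{gam}. The design of $\gamma$ is such that, after differentiating $\eta(\psi_\ep)$ twice and choosing $\lambda$ to absorb the gradient term $\ii\de\eta\wedge\db\eta$, the effective curvature reduces to
\[
\eta\,\ii\Theta_{L,h} + c(\psi_\ep)\,\ii\de\db\psi_\ep \ \ge\ 0, \qquad \frac{c(\psi_\ep)}{\eta}\in[1,1+\delta],
\]
which is nonnegative precisely by \eqref{curv}, the semipositivity of the curvature current of $(M,\evp)$ being added freely. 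Feeding this into Hörmander's machinery with the weight $\evp\eps$ and the extra twist $\eta$ produces a solution $v_\ep$ of $\db v_\ep=g$, and hence a regularized holomorphic extension $\ut_\ep=\ut_0-v_\ep$ satisfying
\[
\int_X \abs{\ut_\ep}_{\omega,h,\evp}^2\,\gamma(\delta\psi_\ep)\,\eps\,dV_{X,\omega} \ \le\ \frac{34}{\delta}\int_{\{t<\psi_\ep<t+1\}}\abs{\ut_0}_{\omega,h,\evp}^2\,\eps\,dV_{X,\omega}.
\]
This is where the precise shape of $\gamma$ and the constant $34/\delta$ are produced, and it is the most delicate part of the bookkeeping.

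Next I would let $\ep\to 0$ together with $t\to-\infty$. The weights $\gamma(\delta\psi_\ep)\eps$ increase to $\gamma(\delta\psi)\eps$, the bounds are uniform in $\ep$, and a weak-$L^2$ limit $v$ of the $v_\ep$ solves $\db v=g$ with the limiting estimate; holomorphicity of $\ut$ and $v|_Y=0$ follow as above. The remaining task --- the conceptual crux special to this setting --- is to identify the right-hand side with $\tfrac{34}{\delta}\int_{\yr}\abs{u}_{\omega,h,\evp}^2\,dV[\psi]$. This is exactly the content of Definition~\ref{iber}: by construction the right-hand side is a shell integral concentrating on $\{t<\psi<t+1\}$ around $Y$, and the defining relation \eqref{dvp} says precisely that $\lim_{t\to-\infty}\int_{\{t<\psi<t+1\}}\tilde g\,\eps\,dV_X=\int_{\yr}g\,dV[\psi]$. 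Applying this with $\tilde g=\abs{\ut_0}_{\omega,h,\evp}^2$ (and $g=\abs{u}_{\omega,h,\evp}^2$), justified through the local model computation of Lemma~\ref{compu} and the existence statement of Proposition~\ref{iber2}, converts the shell integrals into the intrinsic integral against the Ohsawa measure.

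Finally, because $u$ is given only on $\yr$ and $Y$ may be singular, I would carry out the whole construction over $X\setminus\ys$, where $\ys$ is analytic of codimension $\ge 2$ in $X$: removing such a Zariski-closed set does not affect the $L^2$ estimates for $X$ Stein (cf. the remark following Definition~\ref{iber} and \cite{S98}), and since $dV[\psi]$ puts no mass on analytic subsets by Proposition~\ref{iber2}, the right-hand side is unchanged. The resulting $\ut$ is then $L^2$ against a weight bounded below away from $Y$, so it extends holomorphically across $\ys$ by the $L^2$ Riemann extension theorem. The step I expect to be the main obstacle is the twisted estimate of the second paragraph: arranging the curvature error to land exactly in the admissible range $[1,1+\delta]$ for the given $\gamma$, while simultaneously tracking the constant down to $34/\delta$ and keeping all bounds uniform in $\ep$ so that the double limit survives.
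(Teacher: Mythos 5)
Your plan follows the same route as the paper, which does not reprove the extension theorem but quotes \cite[Thm.~2.8]{D15} (and its singular-metric variant in Remark~2.9(b) there) and isolates exactly one step to modify: the convergence of the shell integrals $\int_{\{t<\psi<t+1\}}|\tilde f|^2e^{-\psi}\,dV_{X,\omega}$ to $\int_{Y}|f|^2\,dV[\psi]$, i.e.\ \eqref{conv}, which is the identification you correctly single out as the crux. However, your sketch contains one genuine error. You assert that, because the weight $\gamma(\delta\psi)e^{-\psi}$ ``blows up like $|s|^{-2}$,'' finiteness of $\int_X|v|^2\gamma(\delta\psi)e^{-\psi}\,dV$ forces $v|_Y=0$. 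This is false: $\gamma(\delta\psi)e^{-\psi}$ is comparable to the Poincar\'e weight $\frac{1}{|s|^2(\log|s|)^2}$, which is \emph{locally integrable} across $Y$ --- indeed this is precisely why the extension $\ut$ itself, which restricts to $u\neq 0$ on $\yr$, can make the left-hand side of \eqref{eq1} finite. If the $\db$-equation for the correction term is solved only in this tamed weight, the solution need not vanish on $Y$ and $\ut_0-v$ need not restrict to $u$, so the construction produces no extension. In the Ohsawa--Takegoshi/Demailly scheme the correction $u_t$ is estimated against a weight that is genuinely non-integrable across $Y$ (controlled through the twist functions, with the full $e^{-\psi}$ and no logarithmic taming); the Poincar\'e-type weight appears only in the final estimate for the holomorphic section $F_t=\theta(\psi-t)\tilde f-u_t$. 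This distinction is essential and is missing from your argument.

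A second, smaller gap: to convert the shell integral into $\int_{\yr}|u|^2_{\omega,h,\evp}\,dV[\psi]$ you apply the defining relation \eqref{dvp} with $\tilde g=|\ut_0|^2_{\omega,h,\evp}$, but Definition~\ref{iber} is stated for \emph{continuous} compactly supported test functions, and $\evp$ is a singular metric. You regularize $\psi$ but not $\vp$; the paper's proof runs the argument with the regularizations $e^{-\vp_\nu}$ of the metric of $M$ and takes $g=|f|^2_{\omega,h,e^{-\vp_\nu}}$ in \eqref{conv}, passing to the limit in $\nu$ afterwards. Finally, the one point the paper actually adds to \cite{D15} --- that the hypothesis ``$(X,Y)$ lc along $Y$'' can be weakened to generically lc because only the finiteness of the right-hand side of \eqref{est1} is used --- is compatible with, but not addressed by, your removable-singularity remark; it is worth stating explicitly since it is the reason the theorem applies to singular $Y$ in the form needed later in the paper.
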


\begin{remark1}

Note that under the assumption of Theorem~\ref{extn}, the pair $(X, Y)$  is only `generically lc' while in  \cite[Thm. 2.8]{D15}, $(X, Y)$ is assumed to be lc.  \footnote{ In terms of \cite{D15}, `$\psi$ has log canonical singularities along $Y$' which is  the same as the pair being lc (i.e. log canonical) in the standard terminology.}  As is standard in the use of $L^2$ estimates in this context (cf. \cite{D82}, \cite{S98}), one can take a hypersurface $W \subset X$ such that $(X \setminus W, Y \setminus W)$ is lc and apply  \cite[Thm. 2.8]{D15} for this pair. The extended section $\ut$ on $X \setminus W$ further extends to $X$ by applying Riemann extension theorem (cf. \cite[Prop. 2.18]{K10}) due to the finite $L^2$ norm of $\ut$. Note that the non-lc singularity of $Y$ contributes to the singularity of $dV[\psi]$.

\end{remark1}

\section{Algebraic and analytic adjoint ideals}

 It is well known that being klt can be characterized as triviality of multiplier ideals (cf. \cite{Ko97}). For the notion of plt,  one can use the following adjoint ideals (cf. \cite[\S 9.3.E]{L}, \cite{EL97}, \cite{T10}):

\begin{definition}[Algebraic Adjoint Ideal]\label{Adj}

 Let $X$ be a complex manifold. Let $Y \subset X$ be an irreducible reduced hypersurface. Let $B$ be an effective $\QQ$-divisor on $X$ whose support does not contain $Y$.  Let $f: X' \to X$ be a log resolution of the pair $(X, Y+B)$. We define the algebraic adjoint ideal associated to this pair as 
 
 \begin{equation}\label{Adj1}
 \Adj (X, Y; B) := f_* \OO_{X'} (K_{X'} - \lfloor f^*(K_X + Y+ B) \rfloor  + Y')
 \end{equation}
 where $Y'$ is the strict transform of $Y$. 
 \end{definition}

  It is  known from \cite{T10} that this definition is independent of the choice of $f$ and that $\Adj (X, Y; B) = \OO_X$ if and only if $(X, Y+B)$ is plt in a neighborhood of $Y$.

On the other hand,  Guenancia~\cite{G12} defined the notion of  analytic adjoint ideals attached to a psh function  and showed that it coincides with the algebraic adjoint ideals when the psh function has analytic singularities. In this section, we point out its  generalization used in the proof of our main theorem. Namely, in \cite{G12}, the divisor $Y$ was assumed to be smooth but we will allow $Y$ singular. (See also \cite{G22}, \cite{C21}.)

\begin{definition}[Analytic Adjoint Ideal]\label{aaadj}
Let $X$ be  a complex manifold and  $Y \subset X$ an irreducible reduced hypersurface. For a quasi-psh function $\vp$ on $X$, we define the analytic adjoint ideal sheaf $\aadj(X,Y;\varphi)$ to be the ideal sheaf of germs of $u \in \OO_{X,x}$ satisfying, for some $\ep > 0$,
\begin{equation} \label{adjn}
    \int_{U \cap X}{|u|^2 e^{-(1+\ep)\vp} \frac{1}{|s|^{2}(\log{|s|})^2} dV_\omega } < +\infty
\end{equation}
where $Y = \dv (s)$ on a neighborhood $U$ of $x$ and $dV_\omega$ is the volume form associated to a hermitian metric $\omega$ on $X$.

\end{definition}

 \begin{remark1}
 The need to take such $\ep$'s in Definition~\ref{aaadj} arises even when $\vp$ has analytic singularities (see the example after Definition 2.7 in \cite{G12}). Also it is known from \cite{GL} that the analytic adjoint ideal sheaf may not be coherent when $\vp$ does not have analytic singularities. 
 
\end{remark1}

 We have the following  generalization of \cite[Prop. 2.11]{G12} whose method of proof is adapted to the current setting. 

\begin{theorem}\label{AAAdj}
 Let $X, Y, B$ be as in Definition~\ref{Adj}. Let $\varphi_B$ be a quasi-psh function on $X$ with poles along $B$. Then we have
    $$
        \aadj(X,Y;\varphi_B) = \adj(X,Y;B).
    $$
\end{theorem}

\begin{proof}

At each point $x \in Y$, we will compare the conditions to belong to the stalk of both sides. For this purpose, we may certainly replace $X$ with a relatively compact neighborhood of $x$. 
Let $f : X' \to X$ be a log resolution of the pair $(X,Y+B)$ with exceptional divisors $E_1,\ldots , E_m$. Let $s$ be a local holomorphic function such that $Y = \dv (s)$ near $x$.  
\\

\noindent \emph{\bf  Step 1}. 
By the definition of $\aadj(X,Y;\varphi_B)$, we have $g \in \aadj(X,Y;\varphi_B)_x$ if and only if there exist $\epsilon >0$ and an open neighborhood $U \subset X$ of $x$ such that
\begin{equation*}
    \int_{U }{ |g|^2 e^{-(1+\epsilon)\varphi_B} \frac{1}{|s|^{2} (\log{|s|})^2} dV_\omega} < +\infty.
\end{equation*}
By change of variables, this is equivalent to the following:
\begin{equation} \label{ChgOfVar}
    \int_{f^{-1}(U)}{ |g \circ f|^2 e^{-(1+\epsilon)\varphi_B \circ f } \frac{1}{|s\circ f|^{2}(\log{|s\circ f|})^2 } |J_f|^2 dV_{\omega'} } < +\infty
\end{equation}
where $|J_f|$ is the Jacobian determinant of $f$ and $dV_{\omega'}$ is a smooth volume form associated to a hermitian metric $\omega'$ on $X'$.

We choose a finite number of local analytic coordinate (relatively compact) neighborhoods $V_1, \ldots, V_N$ covering $f^{-1}(x)$ on each of which $\Supp (K_{X'}- f^* (K_X + Y + B) + \exc(f)) = \Supp (f^* (Y+B) + \exc(f)) $ is given by coordinate hyperplanes.
 In this case, the condition (\ref{ChgOfVar}) holds if and only if the integral (\ref{ChgOfVar}) over $V_j \cap f^{-1}(U )$ instead of $f^{-1}(U)$ is finite for all $j$. For  convenience, we let $V := V_j$ and $U' := f^{-1}(U ) \cap V_j$.
Writing $B = \sum_{i=1}^{k}{b_iB_i}$ on $U$, we denote by $B'_i$ the strict transform of $B_i$ on $U'$ for each $i$. Then we may write
\begin{equation} \label{pullbackEq}
     f^* ( K_X +  Y +  B)|_{U'}= K_{X'}|_{U'} + Y' \cap U' + \sum_{i=1}^{k}{b_i   (B'_i \cap U')} + \sum_{i=1}^{l}{e_i  (E_i \cap U')}
\end{equation}
on $U'$ for some $b_i, e_i \in \QQ$ and some $k, l \in \ZZ_+$ with $k+l \le n$. Note that $Y'$ may not intersect $U'$. 

We have two cases: (i) $Y' \cap U' \neq \emptyset$ and (ii) $Y' \cap U' = \emptyset$. In Step 2 and Step 3 (respectively for (i) and (ii)),  we will identify the equivalent condition to (\ref{ChgOfVar})
 in terms of the exponents of $g \circ f$ assuming $g \circ f$ to be a monomial without loss of generality. The equivalent condition is given as \eqref{exponentscondition} (and equivalently \eqref{MonomialCondition}).  In Step 4, we will complete the proof  by comparing such a condition with the definition of algebraic adjoint ideals.
\\

\noindent \emph{\bf Step 2}.
Let us consider the first case $Y' \cap U' \neq \emptyset$.  Assume that $Y' = (w_n = 0)$, $B'_i = (w_i = 0)$ and $E_j = (w_{k+j} = 0)$ for $1 \le i \le k$ and $1 \le j \le l$ using coordinates $(w_1,\ldots, w_{k+l}, w_{k+l+1}, \ldots, w_n)$ for $U'$. Also we may write
\begin{equation} \label{pullbackFtn}
\begin{split}
    \varphi_B \circ f &= \log{ \left( \prod_{i=1}^{k+l}{ |w_i|^{2b_i} } \right) } + \bigO(1), \\
    |s \circ f| &= |w_n| \prod_{i=k+1}^{k+l}{ |w_i|^{ c_i} } , \\
    \abs{J_f}^2 &= \prod_{i=k+1}^{k+l}{ |w_i|^{2d_i} }
\end{split}
\end{equation}
on $U'$ for some nonnegative $c_i, d_i \in \ZZ$. After rearranging $w_{k+1}, \ldots, w_{k+l}$, we may assume that there exists $k_1$ with $k \le k_1$ such that $b_i = 0$ if and only if $k_1+1 \le i \le k+l$. Using \eqref{pullbackFtn}, the condition (\ref{ChgOfVar}) is equivalent to
\begin{equation} \label{coorExpr}
    \int_{U'}{ \frac{|g\circ f|^2 \prod_{i=1}^{k+l}{|w_i|^{-2(1+\epsilon)b_i} } \prod_{i=k+1}^{k+l}{ |w_i|^{-2c_i + 2d_i} }}{ |w_n|^2 \left\{ \log{\left( |w_n| \prod_{i=k+1}^{k+l}{ |w_i|^{c_i} } \right)} \right\}^2 } dV_{\omega'}} < +\infty.
\end{equation}
Using the Parseval identity and the Taylor expansion of $g \circ f$, we may assume that $g \circ f$ is a monomial $w^\alpha = \prod{w_i^{\alpha_i}}$ for some integers $\al_i$. Thus (\ref{coorExpr}) yields
\begin{equation}
    \int_{U'}{ \frac{ \prod_{i=1}^{k+l}{|w_i|^{2\alpha_i-2(1+\epsilon)b_i} } \prod_{i=k+1}^{k+l}{ |w_i|^{-2c_i + 2d_i} } \prod_{i>k+l}{|w_i|^{2\alpha_i}} }{ |w_n|^2 \left\{ \log{\left( |w_n| \prod_{i=k+1}^{k+l}{ |w_i|^{c_i} } \right)} \right\}^2 } dV_{\omega'}} < +\infty.
\end{equation}
With polar coordinates, the above integral becomes
\begin{equation} \label{inPolar}
    C \int_{[0,\delta]^n}{
    \frac{
    r_n^{-2}
    \prod_{i=1}^{k+l}{ r_i^{2\alpha_i - 2(1+\epsilon)b_i +1 } }
    \prod_{i=k+1}^{k+l}{ r_i^{-2c_i + 2d_i} }
    \prod_{i>k+l}{ r_i^{2\alpha_i + 1} }
    }
    {
    \left\{ \log{\left( r_n \prod_{i=k+1}^{k+l}{ r_i^{c_i} } \right)} \right\}^2
    }
    dr_1 \cdots dr_n
    }
\end{equation}
for sufficiently small $\delta >0$ and for some constant $C > 0$. If we set $\lambda_i(\epsilon)$ by
\begin{equation*}
\lambda_i (\epsilon) = \left\{
\begin{array}{ll}
2\alpha_i -2(1+\epsilon)b_i + 1 & (1\le i \le k), \\
2\alpha_i -2(1+\epsilon)b_i - 2c_i + 2d_i + 1 & (k+1 \le i \le k+l), \\
2\alpha_i + 1 &(k+l+1 \le i \le n-1 ), \\
2\alpha_n - 1 & (i = n), 
\end{array} \right.
\end{equation*}
then the integral (\ref{inPolar}) can be written as
\begin{equation} \label{PolarWithLambda}
    \int_{[0,\delta]^n}{
    \frac{
    \prod_{i=1}^{n}{ r_i^{\lambda_i(\epsilon)} }
    }
    {
    \left\{ \log{\left( r_n \prod_{i=k+1}^{k+l}{ r_i^{c_i} } \right)} \right\}^2
    }
    dr_1 \cdots dr_n
    }.
\end{equation}
When $\epsilon$ is sufficiently small, we claim that the integral \eqref{PolarWithLambda} is finite for arbitrary $\epsilon'$ with $0< \epsilon' <\epsilon$ if and only if we have
\begin{equation} \label{exponentscondition}
\begin{split}
    \lambda_i (\epsilon) & \ge -1 \quad (1\le i \le k_1), \\
    \lambda_i (\epsilon) & > -1 \quad (k_1+1 \le i \le k+l),
\end{split}
\end{equation}
or equivalently, we have
\begin{equation}
\begin{array}{l@{\hfill}ll} \label{MonomialCondition}
    \alpha_i \,\, & \ge  (1+\epsilon)b_i -1 & (1\le i \le k), \\
    \alpha_i \,\, & \ge (1+\epsilon)b_i + c_i - d_i -1 & (k+1 \le i \le k_1), \\
    \alpha_i \,\, & > c_i - d_i -1 & (k_1+1 \le i \le k+l).
\end{array}
\end{equation}

Indeed, the above integral is finite  for every $\epsilon' < \epsilon$ if and only if $\lambda_i(\epsilon) \ge -1$ for all $i$ and $\lambda_i(\epsilon) > -1$ for all $i$ with $b_i = 0$ except for at most one $i_0$. If $\alpha_n = 0$, then the equivalence of (\ref{PolarWithLambda}) and (\ref{exponentscondition}) can be readily checked. Suppose that $\alpha_n > 0$. From the second line of (\ref{pullbackFtn}) and the inclusion $f(E_i) \subset Y$ under the assumption $c_i > 0$, we obtain that for $k_1 + 1 \le i \le k+l$,
$$
\lambda_{i}(\epsilon) = 2\alpha_{i} - 2c_{i} + 2d_{i} + 1 \ge 2 c_{i} (\alpha_n - 1) + 2d_{i} + 1 \ge 0,
$$
which shows that (\ref{exponentscondition}) is satisfied.
\\

\noindent \emph{\bf Step 3}.
 Now consider the second case, $Y' \cap U' = \emptyset$. The only difference from the first case (in Step 2) is that the second equation in (\ref{pullbackFtn}) now becomes
$$
|s \circ f| = \prod_{i=k+1}^{k+l} |w_i|^{c_i}.
$$
and the integral corresponding to (\ref{PolarWithLambda}) is given by 
\begin{equation} \label{PolarWithLambda2}
    \int_{[0,\delta]^n}{
    \frac{
    \prod_{i=1}^{n}{ r_i^{\lambda_i(\epsilon)} }
    }
    {
    \left\{ \log{\left( \prod_{i=k+1}^{k+l}{ r_i^{c_i} } \right)} \right\}^2
    }
    dr_1 \cdots dr_n
    }
\end{equation}
where
\begin{equation*}
\lambda_i (\epsilon) = \left\{
\begin{array}{ll}
2\alpha_i -2(1+\epsilon)b_i + 1 & (1\le i \le k), \\
2\alpha_i -2(1+\epsilon)b_i - 2c_i + 2d_i + 1 & (k+1 \le i \le k+l), \\
2\alpha_i + 1 &(k+l+1 \le i \le n). \\
\end{array} \right.
\end{equation*}

Let $k_1$ be an integer as was determined in Step 2 (after \eqref{pullbackFtn}). Our claim is that the integral (\ref{PolarWithLambda2}) is finite for all sufficiently small $\epsilon > 0$ if and only if  (\ref{exponentscondition}) holds. As checking the necessity is elementary, it is enough to show the sufficiency. Since the integrability of (\ref{PolarWithLambda2}) for $\epsilon'$ with $0 < \epsilon' < \epsilon$ implies $\lambda_{i}(\epsilon) \ge -1$ when $i \le k_1$, it remains to show the following lemma.

\begin{lemma}
If the integral (\ref{PolarWithLambda2}) is finite, then $\lambda_{k+i} (\epsilon) > -1$ for $k_1 + 1 \le k+ i \le n$.
\end{lemma}

\begin{proof}
Suppose that there is an index $i_0$ such that $b_{k+i_0} = 0$ and $\lambda_{k+i_0}(\epsilon) = -1$. This implies that
$$
c_{k+i_0} = \alpha_{k+i_0} + d_{k+i_0} + 1 \ge 1
$$
and the center of $E_{i_0}$ with respect to $f$ is contained in $Y$. Note that such $E_{i_0}$ with $\lambda_{k+i_0} (\epsilon) = -1$ cannot meet $Y'$ by the argument in the second step.

 Since $f$ can be locally taken to be the composition of a finite number of blow-ups along smooth centers \footnote{Here we use the log resolution in the analytic category, cf. \cite{H64}, \cite{AHV77}, \cite{W09}. In particular,  we use \cite[Thm. 2.0.2, 2.0.3]{W09} as $U'$ is taken to be relatively compact from Step 1.}, we may decompose $f$ as the composition of two proper bimeromorphic morphisms   $f_2 : X' \to X_1$ and $f_1 : X_1 \to X $ such that an exceptional divisor $E$ on $X'$ is not exceptional with respect to $f_2$ if and only if $f_2(E_{i_0})$ is contained in $f_2(E)$. Then there exist exceptional divisors of $f$, say $E_1, \ldots, E_{i_0-1}$ (after rearranging the order of coordinates), so that the set $\{ E_1,\ldots, E_{i_0-1}\}$ is equal to the set of all exceptional divisors $E$ of $f$ satisfying
\begin{enumerate}
\item $E$ is not exceptional with respect to $f_2$, and
\item $f_2(E_{i_0})$ is contained in $f_2(E)$. 
\end{enumerate}
 
  Here, after replacing the index $i_0$ by a smaller index if necessary, we may assume that either
\begin{enumerate}
\item $\lambda_{k+i} (\epsilon) > -1$ and $b_{k+i} = 0$, or
\item $\lambda_{k+i} (\epsilon) = -1$ and $b_{k+i} > 0$
\end{enumerate}
holds for each $i=1,\ldots,i_0-1$. We note that $\lambda_{k+i}(\epsilon) + 2(1+\epsilon)b_{k+i} \ge 1$ in both cases. This follows from the observation that $\lambda_{k+i}(\epsilon) + 2(1+\epsilon)b_{k+i}$ is an odd integer greater than $-1$ in the former case and $b_{k+i}$ is a positive integer in the latter case.

On $X_1$, assume that (the corresponding divisor to) $E_i$ is given by $\mathrm{div}(s_i)$ locally for each $i = 1, \ldots, i_0-1$ and write
$$
s_i \circ f_2  = w_{k+i} \cdot w_{k+i_0}^{\beta_i} \cdot u_i
$$
where $\beta_i > 0$ and $u_i$ is a holomorphic function that is not generically vanishing on $E_1,\ldots, E_{i_0-1}$ and $E_{i_0}$. Since the codimension of $f_2(E_{i_0})$ is at least $2$, we obtain the following:
\begin{equation*}
\begin{split}
\alpha_{k+i_0} &\ge \sum_{i=1}^{i_0-1} \beta_i \alpha_{k+i} , \\
d_{k+i_0} & \ge \sum_{i=1}^{i_0-1} \beta_i d_{k+i} , \\
c_{k+i_0} & = \sum_{i=1}^{i_0-1} \beta_i c_{k+i}.
\end{split}
\end{equation*}
This implies that
\begin{align*}
\lambda_{k+i_0} (\epsilon) &\ge 1 + \sum_{i=1}^{i_0-1}{ 2\beta_i (\alpha_{k+i} - c_{k+i} + d_{k+i})} \\
& = 1 + \sum_{i=1}^{i_0-1}{ \beta_i \{ \lambda_{k+i}(\epsilon) + 2(1+\epsilon)b_{k+i} - 1 \} } \\
& \ge 1,
\end{align*}
which is a contradiction. This completes the proof of the lemma.
\end{proof}
\;
\\

\noindent \emph{\bf Step 4}.
Finally we consider the algebraic adjoint ideal.
Combining (\ref{pullbackEq}) and (\ref{pullbackFtn}), we have
\begin{equation*}
e_i = b_{k+i} + c_{k+i}  - d_{k+i}.
\end{equation*}
By the definition of $\adj(X,Y;B)$, we see that $g \in \adj(X,Y;B)_x$ if and only if
\begin{equation}
    g \circ f \in \OO_{X'}( -\sum{ \lfloor b_i \rfloor B'_i } - \sum{ \lfloor b_{k+i} + c_{k+i} - d_{k+i} \rfloor E_i } ) (f^{-1}(U))
\end{equation} 

\noi where $ \lfloor x \rfloor$ is the greatest integer less than or equal to $x$. 
 Therefore we have $g \in \adj(X,Y;B)$ if and only if
\begin{equation*}
\begin{array}{l@{\hfill}ll}
    \ord_{B'_i}(g\circ f) \,\, & \ge \lfloor b_i \rfloor & (1 \le i \le k), \\
    \ord_{E_i}(g \circ f) \,\, & \ge \lfloor b_i + c_i - d_i \rfloor & (k+1 \le i \le k + l).
\end{array}
\end{equation*}
which is equivalent to (\ref{MonomialCondition}). This completes the proof of Theorem~\ref{AAAdj}. 
\end{proof}

\medskip

\section{Proofs of the main results}

 In this final section, based on previous preparations, we complete the proofs of the results (\ref{dvpsi3}), (\ref{dvpsi}), (\ref{main}), (\ref{neg3}) along with  presenting some examples related to these results.

\;

\begin{proof}[\textbf{Proof of Theorem~\ref{dvpsi3}}]

Let $f: X' \to X$ be a log resolution of the pair $(X, Y+ B)$ and write $$K_{X'} + \Delta_{X'}  = f^* (K_X + \Delta) = f^* (K_X + Y+ B).$$ Let $Y'$ be the strict transform of $Y$ on $X'$ and $\tilde{f} : Y' \to Y^\nu$ the induced morphism to the normalization $Y^\nu$ of $Y$.  Then define $\Delta'$ by 
$K_{X'} + \Delta' = f^* (K_X + Y) .$ Note that we have $\Delta_{X'} - Y' = (\Delta' - Y') + f^*B$.

We will use Proposition~\ref{iber2} (3') (taking $D=0$ there). Let $\alpha$ denote the measure $d\nu$ in \eqref{dvp1} so that the direct image $f_* \alpha$ is equal to $dV[\psi]$. 
 By the projection formula of the direct image~\cite[I (2.14) (c)]{DX}, the direct image along $f$ of the measure $(f^* e^{-\vp_B}) \alpha$  is equal to $e^{-\vp_B} f_* \alpha = e^{-\vp_B} dV[\psi]$. We note that $(f^* e^{-\vp_B}) \alpha$ on $Y'$ has poles along the divisor $$( (\Delta' - Y') + f^*B)|_{Y'} = (\Delta_{X'} - Y')|_{Y'}.$$ Since this is an snc divisor, we see that  the measure $\beta:= (f^* e^{-\vp_B}) \alpha$ is locally integrable if and only if the pair $(Y', (\Delta_{X'} - Y')|_{Y'})$ is klt.  Hence this is equivalent to 
the measure $e^{-\vp_B} dV[\psi]$ being locally integrable since this is the direct image (i.e. change of variables in this case) of $\beta$. 

  On the other hand, we have $K_{Y'} + (\Delta_{X'} - Y')|_{Y'} \sim_{\QQ}  \tilde{f}^*  (K_{Y^{\nu}} + \Diff (B) )$ by Proposition~\ref{prop1}.  Thus we have the equivalence :  the pair $(Y^{\nu},  \Diff (B))$ is klt if and only if  the pair $(Y', (\Delta_{X'} - Y')|_{Y'})$ is klt from  \cite[(3.10.2)]{Ko97}. Combining these, the proof is complete. 
 \end{proof}

\begin{proof}[\textbf{Proof of Corollary~\ref{dvpsi}}]

  Applying Theorem~\ref{dvpsi3} to $B=0$ and $\vp_B = 0$, we first have that $dV[\psi]$ is locally integrable if and only if the pair $(Y^{\nu}, \Diff 0)$ is klt.  From this, one direction (`if') is clear using Proposition~\ref{diff0}.

  To prove the other direction,  assume that $dV[\psi]$ is locally integrable.  First we will check that $Y$ is normal. 
  Let $\widetilde{\OO}_{Y}$ be the sheaf of weakly holomorphic functions on $Y$, i.e. holomorphic functions $f$ on $Y_{\reg}$ such that every point of $Y_{\sing}$ has a neighborhood $V$ with $f$ being bounded on $Y_{\reg} \cap V$. It suffices to show that 
 $\widetilde{\OO}_{Y} = \OO_{Y}$ by \cite[Chap.II \S 7]{DX}. 

 Let $f$ be a weakly holomorphic function germ $f$ on $Y$. 
 Since the constant function $1$ is locally integrable with respect to $dV[\psi]$ and $f$ is locally bounded, it follows that $f$
  is locally $L^2$ with respect to $dV[\psi]$ at every point $q \in Y$. Hence we can apply Theorem~\ref{extn} to a relatively compact Stein neighborhood $U \subset X$  of $q$ and extend $f$ from $U \cap Y_{\reg}$ to a holomorphic function $F$ on $U$. The restriction of $F$ back to $Y \cap U$ is holomorphic, hence  we get $\widetilde{\OO}_{Y} = \OO_{Y}$. 
  
  Now that $Y$ is normal, by Proposition~\ref{diff0}, we have $(Y^{\nu}, \Diff 0) = (Y, 0)$. As is well known, cf. \cite[Thm. 11.1]{Ko97}, $(Y,0)$ is klt if and only if $(Y,0)$ is canonical since $Y$ is a Cartier divisor in a complex manifold. 
\end{proof}

 The following is an example where $\Diff (0) = 0$ but $dV[\psi]$ is not locally integrable.

 \begin{example}
 
Suppose $X$ is smooth of dimension $n \ge 3$. Let $Y \subset X$ be an irreducible normal hypersurface which is smooth except at a point $p \in Y$.  Then we have $\Diff (0) = 0$ by Proposition~\ref{diff0}. 
Moreover, suppose that the multiplicity of $Y$ at $p$ is sufficiently high so that $(Y, 0)$ fails to be klt. By Corollary~\ref{dvpsi}, $dV[\psi]$ is not locally integrable at $p$. 
 
 \end{example} 

We also have the following related example. 

\begin{example}\label{Fermat}

 Let $Y \subset X = \CC^n$ be given by $f(z_1, \ldots, z_n) = z_1^d + \ldots + z_n^d = 0$ where $n \ge 3$ and $d \ge 1$.  Then $Y$ is normal and by Proposition~\ref{diff0}, $\Diff 0 = 0$.   Take $\psi = \log \abs{f}^2$.  By Corollary~\ref{dvpsi}, the Ohsawa measure $dV[\psi]$ is locally integrable if and only if   $(Y,0)$ is a canonical pair if and only if $d \le n-1$. 

 On the other hand, let $\wt{dV_Y} := \frac{1}{\abs{df}^2} dV$ where $dV$ is a smooth volume form on $Y$ (i.e. induced from a smooth form on $X$). Since $\frac{1}{\abs{df}^2} = \frac{1}{\sum \abs{z_j}^{d-1}}$ (up to a positive locally bounded factor), $\wt{dV_Y}$ is locally integrable if and only if $d - 1 \le n-2$ by \cite[Lem. 2.3]{GL18}.  In particular, for a holomorphic function, having finite $L^2$ norm with respect to $\wt{dV_Y}$ do not force it to vanish along the singular locus of $Y$ if $d \le n-1$. 

\end{example}

Now we turn to the proof of Theorem~\ref{main} regarding the inversion of adjunction. 

\begin{proof}[\textbf{Proof of Theorem~\ref{main}}]

 One direction (if)  is easy and well-known from algebraic geometry and we refer the readers to \cite[Thm. 7.5]{Ko97}.
 
   To prove the other direction using $L^2$ extension, suppose that $(Y^{\nu}, \Diff(B) )$ is klt. We need to show that the pair $(X, Y+B)$ is plt in a neighborhood of $Y$.  The problem is local since the notions of klt and plt are local in the analytic topology as is well known, cf. \cite[(3.7.6)]{F07}. Therefore we may restrict to a Stein open subset $U \subset X$ whose closure is compact.  We may also assume that the line bundles $K_X, L, M$ in Theorem~\ref{extn}  are trivialized on $U$. For simplicity of notation, we  denote $U$ by $X$ again. 
   
   In Theorem~\ref{extn}, we will take $e^{-\vp}$ to be $e^{-(1+\ep)\vp_B}$ for some $\ep >0$ where $\vp_B$ is a  psh function with poles along the effective divisor $B$.  Since $X$ is smooth, we have $\Diff ((1+\ep)B) = \Diff(0) + (1+\ep) B|_{Y^{\nu}}$. Hence the pair $(Y^{\nu}, \Diff( (1+\ep)B))$ is still klt for $\ep > 0$ sufficiently small. 
   It follows that, by Theorem~\ref{dvpsi3}, the constant function $1$ on the regular locus of $Y$ is $L^2$ with respect to $e^{- (1+\ep)\vp_B} dV[\psi]$.  Now we  apply the $L^2$ extension Theorem~\ref{extn}  to extend $u=1$ to a holomorphic function $\ut$ satisfying \eqref{eq1}. (Note that the curvature conditions are easily satisfied since $X$ is Stein.) 
 
  From the $L^2$ estimate \eqref{eq1}, we see that $\ut$ belongs to the analytic adjoint ideal sheaf $\AAdj (X, Y; \vp_B)$ (see Definition~\ref{aaadj}) which is equal to the algebraic adjoint ideal sheaf $\Adj(X, Y; B)$ by Theorem~\ref{AAAdj}. Since the restriction of $\ut$ to $Y$ is equal to $1$, $\Adj(X, Y; B)$ is trivial in a neighborhood of $Y$. Thus the pair $(X, Y+B)$ is plt in a neighborhood of $Y$, which concludes the proof. 
\end{proof}

\begin{remark1}\label{kawadiff}

 In the above proof of Theorem~\ref{main}, in fact, we could also use the $L^2$ extension theorem of \cite{K10} instead of Theorem~\ref{extn} (cf. Theorem~\ref{dvpsi3}).  However the $L^2$ extension of \cite[Main Thm.]{K10} is formulated for $Y \subset X$ of general codimension in the setting of log canonical centers and Kawamata metrics (where $X$ is a normal variety). Instead of taking what we need from that statement using the relation between Kawamata metrics and the `differents' (cf. \cite[p.140, (4)]{H14}, \cite{Ko07}), we used Theorem~\ref{extn} of \cite{D15} along with recalling the theory of `differents' in \S 2.

\end{remark1}

Finally, we give the proof of Theorem~\ref{neg3} which generalizes (in codimension $1$) a negative result of Guan and Li~\cite{GL18}. 

\begin{proof}[\textbf{Proof of Theorem~\ref{neg3}}]

 Suppose that such a version of the  $L^2$ extension Theorem~\ref{extn0} holds, i.e. having a locally integrable measure $dV_Y$ in the place of $dV[\psi]$. Then we can apply the theorem to $u=1$ and $\vp = 0$ to obtain an extension $\ut$ satisfying  that the LHS of the estimate \eqref{est1} is finite, i.e.  $$ \int_{X} \abs{\ut}^2  \gamma (\delta \psi) e^{-\psi} dV_X < +\infty $$ where $\psi$ is quasi-psh with poles along $Y$.  Hence by the same arguments using the definition of the analytic adjoint ideal \eqref{adjn} and Theorem~\ref{AAAdj} in the last paragraph of the proof of Theorem~\ref{main}, the pair $(X, Y)$ is plt in a neighborhood of $Y$. It follows from Proposition~\ref{prop1} that $(Y^\nu,  \Diff 0)$ is klt, which is contradiction. 
\end{proof}

\footnotesize

\bibliographystyle{amsplain}

\begin{thebibliography}{widest-}


\renewcommand{\baselinestretch}{1.0} 

\bibitem[AHV77]{AHV77} J. M. Aroca, H. Hironaka and J. L. Vicente, \emph{Desingularization theorems},  Memorias de Matemática del Instituto ''Jorge Juan'',  Madrid 1977.

\bibitem[AM15]{AM15} W. Alexandre and E. Mazzilli, \emph{Extension of holomorphic functions defined on singular complex hypersurfaces with growth estimates}, Ann. Sc. Norm. Super. Pisa Cl. Sci. (5) 14 (2015), no. 1, 293–330. 


\bibitem[BBEGZ]{BBEGZ} R. Berman, S. Boucksom, P. Eyssidieux, V. Guedj and A. Zeriahi, \emph{Kähler-Einstein metrics and the Kähler-Ricci flow on log Fano varieties}, J. Reine Angew. Math. 751 (2019), 27–89.

\bibitem[B96]{B96} B. Berndtsson, \emph{The extension theorem of Ohsawa-Takegoshi and the theorem of Donnelly-Fefferman}, Ann. Inst. Fourier (Grenoble) 46 (1996), no. 4, 1083–1094.

\bibitem[B13]{B13} Z. Błocki, \emph{ Suita conjecture and the Ohsawa-Takegoshi extension theorem}, Invent. Math. 193 (2013), no. 1, 149–158.

\bibitem[CP20]{CP20} J. Cao and M. P\u aun, \emph{On the Ohsawa-Takegoshi extension theorem}, arXiv:2002.04968. 


\bibitem[C21]{C21} T.O.M. Chan, \emph{A new definition of analytic adjoint ideal sheaves via the residue functions of log-canonical measures I}, arXiv:2111.05006. 

\bibitem[C11]{C11} B-Y. Chen, \emph{A simple proof of the Ohsawa-Takegoshi extension theorem}, arXiv:1105.2430. 


\bibitem[D82]{D82}  J.-P. Demailly, \emph{Estimations $L^2$ pour l'opérateur $\db$ d'un fibré vectoriel holomorphe semi-positif au-dessus d'une variété kählérienne complète}, Ann. Sci. École Norm. Sup. (4) 15 (1982), no. 3, 457–511. 

\bibitem[D00]{D00} J.-P. Demailly, \emph{On the Ohsawa-Takegoshi-Manivel $L^2$ extension theorem},  Complex analysis and geometry (Paris, 1997), 47–82, Progr. Math., 188, Birkhäuser, Basel, 2000.

\bibitem[D15]{D15} J.-P. Demailly, \textit{Extension of holomorphic functions defined on non reduced analytic subvarieties}, The legacy of Bernhard Riemann after one hundred and fifty years. Vol. I, 191 - 222, Adv. Lect. Math. (ALM), 35.1, Int. Press, Somerville, MA, 2016. 



\bibitem [DX]{DX} J.-P. Demailly, \emph{Complex analytic and differential geometry}, self-published e-book, Institut Fourier, 455 pp, last version: June 21, 2012.



\bibitem[DK01]{DK01} J.-P. Demailly and J. Koll\'ar, \emph{Semi-continuity of complex singularity exponents and Kähler-Einstein metrics on Fano orbifolds}, Ann. Sci. École Norm. Sup. (4) 34 (2001), no. 4, 525–556.


\bibitem[EL97]{EL97} L. Ein and R. Lazarsfeld, \emph{Singularities of theta divisors and the birational geometry of irregular varieties},
J. Amer. Math. Soc. 10 (1997), no. 1, 243–258.


\bibitem[F07]{F07} O. Fujino, \emph{What is log terminal?}, Flips for 3-folds and 4-folds, 49–62,
Oxford Lecture Ser. Math. Appl., 35, Oxford Univ. Press, Oxford, 2007.

\bibitem[F22]{F22} O. Fujino, \emph{Minimal model program for projective morphisms between complex analytic spaces}, arXiv:2201.11315. 


\bibitem[G12]{G12} H. Guenancia, \textit{Toric plurisubharmonic functions and analytic adjoint ideal sheaves}, Math. Z. {271} (2012), 1011-1035.

\bibitem[G22]{G22} H. Guenancia, \emph{Erratum for the article ``Toric plurisubharmonic functions and analytic adjoint ideal sheaves''} (2022). https://hguenancia.perso.math.cnrs.fr/resources/Erratum.pdf

\bibitem[GL]{GL} Q. Guan and Z.   Li, \emph{Analytic adjoint ideal sheaves associated to plurisubharmonic functions}, Ann. Sc. Norm. Super. Pisa Cl. Sci. (5) 18 (2018), no. 1, 391–395.


\bibitem[GL18]{GL18} Q.  Guan and Z.  Li, \emph{A characterization of regular points by Ohsawa-Takegoshi extension theorem}, J. Math. Soc. Japan 70 (2018), no. 1, 403–408. 


\bibitem[GZ12]{GZ12}  Q. Guan and X. Zhou, \emph{Optimal constant problem in the $L^2$ extension theorem}, C. R. Math. Acad. Sci. Paris 350 (2012), no. 15-16, 753–756.

\bibitem[GZ15]{GZ15}  Q. Guan and X. Zhou, \emph{A solution of an $L^2$ extension problem with an optimal estimate and applications}, Ann. of Math. (2) 181 (2015), no. 3, 1139-1208. 


\bibitem[GZe]{GZ17} V. Guedj and A. Zeriahi, \emph{Degenerate complex Monge-Ampère equations}, EMS Tracts in Mathematics, 26. European Mathematical Society (EMS), Zürich, 2017.

\bibitem[H64]{H64} H. Hironaka, \emph{Resolution of singularities of an algebraic variety over a field of characteristic zero}, I, II. Ann. of Math. (2) 79 (1964), 109–203; ibid. (2) 79 (1964), 205–326.

\bibitem[H14]{H14} C. Hacon, \emph{On the log canonical inversion of adjunction}, Proc. Edinb. Math. Soc. 57 (2014), 139–143.

\bibitem[K07]{K10} D. Kim, \emph{ $L^2$ extension of adjoint line bundle sections}, Ann. Inst. Fourier (Grenoble) 60 (2010), no. 4, 1435 - 1477 (a version of the author's 2007 thesis). 


\bibitem[K21]{K21} D. Kim, \emph{$L^2$ extension of holomorphic functions for log canonical pairs}, arXiv:2108.11934. 


\bibitem[Ko92]{Ko92} J. Koll\'ar et al, \emph{Flips and abundance for algebraic threefolds}, Astérisque No. 211 (1992). Société Mathématique de France, Paris, 1992.


\bibitem[Ko97]{Ko97} J. Koll\'ar, \emph{Singularities of pairs}, Algebraic geometry-Santa Cruz 1995, 221-287, Proc. Sympos. Pure Math., 62, Part 1, Amer. Math. Soc., Providence, RI, 1997.

\bibitem[Ko07]{Ko07} J. Koll\'ar, \emph{Kodaira's canonical bundle formula and subadjunction},  Flips for 3-folds and 4-folds, 134-162, Oxford Lecture Ser. Math. Appl., 35, Oxford Univ. Press, Oxford, 2007.



\bibitem[Ko13]{Ko13} J. Koll\'ar, \emph{Singularities of the minimal model program}, Cambridge Tracts in Mathematics, 200. Cambridge University Press, Cambridge, 2013. 


\bibitem[Ko21]{Ko21} J. Koll\'ar, \emph{Moishezon morphisms}, Pure Appl. Math. Q. 18 (2022), no. 4, 1661–1687.

\bibitem[KM98]{KM98} J. Koll\'ar and S. Mori, \emph{Birational geometry of algebraic varieties}, Cambridge Tracts in Mathematics, 134. Cambridge University Press, Cambridge, 1998.


\bibitem[L]{L} R. Lazarsfeld, \emph{Positivity in Algebraic Geometry},  Ergebnisse der Mathematik und ihrer Grenzgebiete. 3. Folge. 48 - 49. Springer-Verlag, Berlin, 2004.



\bibitem[L57]{L57} P. Lelong, \emph{Intégration sur un ensemble analytique complexe}, Bull. Soc. Math. France 85 (1957), 239–262.


\bibitem[Li21]{Li21} Z. Li, \emph{Analytic adjoint ideal sheaves associated to plurisubharmonic functions II}, Ann. Sc. Norm. Super. Pisa Cl. Sci. (5) 22 (2021), no. 1, 183-193. 

\bibitem[Li20]{Li20} Z. Li, \emph{A note on multiplier ideal sheaves on complex spaces with singularities}, arXiv:2003.11717. 


 
\bibitem[M93]{M93} L. Manivel, \emph{Un th\'eor\`eme de prolongement $L^2$ de sections holomorphes d'un fibr\'e hermitien}, Math. Z. 212 (1993), no. 1, 107–122. 



\bibitem[MV]{MV} J. McNeal and D. Varolin, \emph{Analytic inversion of adjunction: $L^2$ extension theorems with gain}, Ann. Inst. Fourier (Grenoble) 57 (2007), no. 3, 703–718.

\bibitem[O4]{O4} T. Ohsawa, \emph{On the extension of $L^2$ holomorphic functions. IV: A new density concept}, Geometry and analysis on complex manifolds, 157–170, World Sci. Publ., River Edge, NJ, 1994. 

\bibitem[O5]{O5} T. Ohsawa, \emph{On the extension of $L^2$ holomorphic functions. V: Effects of generalization}, Nagoya Math. J.  161  (2001), 1-21.  Erratum: 163 (2001), 229. 

\bibitem[O7]{O7} T. Ohsawa, \emph{On the extension of $L^2$ holomorphic functions. VII: Hypersurfaces with isolated singularities}, Sci. China Math. 60 (2017), no. 6, 1083–1088.



\bibitem[O18]{O18} T. Ohsawa, \emph{$L^2$ approaches in several complex variables}, Second edition, Springer Monographs in Mathematics. Springer, Tokyo, 2018.

\bibitem[O20]{O20} T. Ohsawa, \emph{A survey on the $L^2$ extension theorems}, J. Geom. Anal. 30 (2020), no. 2, 1366–1395.

\bibitem[OT87]{OT} T. Ohsawa and K. Takegoshi, \emph{On the extension of $L\sp 2$ holomorphic functions}, Math. Z. 195 (1987), no. 2, 197--204. 


\bibitem[Se93]{Se93} K. Seip, \emph{Beurling type density theorems in the unit disk}, Invent. Math. 113 (1993), no. 1, 21–39.

\bibitem[Sh92]{S92} V.V. Shokurov, \emph{Three-dimensional log perestroikas}, Izv. Ross. Akad. Nauk Ser. Mat. 56 (1992), no. 1, 105–203. 



\bibitem[S98]{S98} Y.-T. Siu, \emph{Invariance of plurigenera}, Invent. Math., {134} (1998), 661--673.

\bibitem[S02]{S02} Y.-T. Siu, \emph{Extension of twisted pluricanonical sections with plurisubharmonic weight and invariance of semipositively twisted plurigenera for manifolds not necessarily of general type}, Complex geometry (G\"ottingen, 2000), 223-277, Springer, Berlin, 2002.


\bibitem[T10]{T10} S. Takagi, \emph{Adjoint ideals along closed subvarieties of higher codimension}, J. Reine Angew. Math. 641 (2010), 145–162. 


\bibitem[W09]{W09} J. Włodarczyk,  \emph{Resolution of singularities of analytic spaces}, Proceedings of Gökova Geometry-Topology Conference 2008, 31–63, Gökova Geometry/Topology Conference (GGT), Gökova, 2009.

\end{thebibliography}

\quad
\\

\normalsize

\noi \textsc{Dano Kim}

\noi Department of Mathematical Sciences and Research Institute of Mathematics

\noi Seoul National University, Seoul 08826, Korea

\noi e-mail: kimdano@snu.ac.kr

\qa
\\

\noi \textsc{Hoseob Seo}

\noi Research Institute of Mathematics

\noi Seoul National University, Seoul 08826, Korea
\\

\noi Current address of Hoseob Seo : 

\noi Center for Complex Geometry, Institute for Basic Science (IBS)

\noi 55 Expo-ro, Yuseong-gu, Daejeon 34126, Korea

\noi e-mail: hskoot@snu.ac.kr; hskoot@ibs.re.kr

\end{document}